\theoremstyle{plain}
\newtheorem{thm}{Theorem\hspace{0.05pt}}[section]
\newtheorem{cor}[thm]{Corollary\hspace{0.05pt}}
\newtheorem{lem}[thm]{Lemma\hspace{0.05pt}}
\newtheorem{con}[thm]{Conjecture\hspace{0.05pt}}
\theoremstyle{definition}
\newtheorem{definition}[thm]{Definition\hspace{0.05pt}}
\newtheorem{remark}{Remark\hspace{0.05pt}}
\begin{document}
\title{Extremal spectral radius of degree-based weighted adjacency matrices of graphs with given order and size\footnote{\noindent This work is    partially supported by 
			the National Natural Science Foundation of China (No.~12271182).\\ 
   E-mail: 2011283@tongji.edu.cn (C. Shen),shan\_haiying@tongji.edu.cn (H. Shan)}}

\author{ Chenghao Shen,\, Haiying Shan\thanks{Corresponding author}
\\ \small School of Mathematical Sciences, Key Laboratory of Intelligent Computing and Applications\\ \small(Ministry of Education), Tongji University, Shanghai, China}

\date{}
\maketitle

\begin{abstract}
The $f$ adjacency matrix is a type of edge-weighted adjacency matrix, whose weight of an edge $ij$ is $f(d_i,d_j)$, where $f$ is a real symmetric function and $d_i,d_j$ are the degrees of vertex $i$ and vertex $j$. The $f$-spectral radius of a graph is the spectral radius of its $f$-adjacency matrix. In this paper, the effect of subdividing an edge on $f$-spectral radius is discussed. Some necessary conditions of the extremal graph with given order and size are derived. As an example, we obtain the bicyclic graph(s) with the smallest $f$-spectral radius for fixed order $n\geq8$ by applying generalized Lu-Man method. 
\end{abstract}

\par \textbf{Keywords: }Weighted adjacency matrix, Spectral radius, Bicyclic graph, Generalized Lu-Man method
\par \textbf{Mathematics Subject Classification: }05C50, 05C09, 05C38, 15A18

\section{Introduction}
\par In this paper, we only consider simple graphs. For basic graph and graph spectra notations, we refer to \cite{bondy2008graph} and \cite{cvetkovic2010introduction}.  We often write $d_{v_i}$ or $d_i$ to refer $d_G(v_i)$, the degree of vertex $v_i$ in graph $G$, when there is no ambiguity.

\par Das et al. \cite{das2018degree} first formally gave the following definition of the weighted adjacency matrix weighted by degree-based indice $f$ ($f$-adjacency matrix for short). 
\begin{definition}
Let $G=(V,E)$ be a connected graph, $V=\{v_i|i\in[n]\}$, $d_i=d(v_i)$ is the degree of $v_i$. Let $f(x,y)>0$ $(x,y>0)$ be a symmetric real function. Denote $A_f(G)$ be the $f$-adjacency matrix of $G$, whose $ij$-entry is defined as
$$(A_f(G))_{i,j}=\begin{cases}
f(d_i,d_j), & \text{ if } v_iv_j\in E(G), \\ 
0, & \text{ otherwise. }
\end{cases}$$
Denote $\rho_f(G):=\rho(A_f(G))$ be the $f$-spectral radius of $G$. 
\end{definition}

Matrices that weighted by specified indices have been studied separately, including Atom-Bond Connectivity matrix (where $f(x,y)=\sqrt{(x+y-2)/(xy)}$, also known as ABC matrix) \cite{estrada1998atom, chen2019extremality}, Randi\'c matrix (where $f(x,y)=1/\sqrt{xy}$)\cite{randic1975characterization, liu2012note}, Sombor matrix (where $f(x,y)=\sqrt{x^2+y^2}$)\cite{Gutman2021SpectrumAE}, etc. 
\par Li et al. \cite{li2021trees} first proposed the idea to unify the spectral study of this type of matrices. They also obtained the extremal tree when adding some particular restrictions on $f(x,y)$. 

\begin{thm}[\cite{li2021trees}]
Assume that $f(x,y)>0$ is a symmetric real function, increasing and convex in variable $x$. Then the extremal tree with the largest $\rho_f$ in $\mathcal{T}_n$ is a star or a double star. 
\end{thm}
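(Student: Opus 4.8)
The plan is to show that any extremal tree must have diameter at most $3$, since a tree on $n$ vertices is a star or a double star precisely when its diameter is at most $3$. First I would fix an extremal tree $T\in\mathcal T_n$ attaining the maximum $f$-spectral radius. Because $T$ is connected and $A_f(T)$ is a nonnegative symmetric matrix, Perron--Frobenius theory supplies a positive eigenvector $\mathbf x=(x_1,\dots,x_n)^\top$ with $A_f(T)\mathbf x=\rho\,\mathbf x$, where $\rho=\rho_f(T)$; equivalently
$$\rho\, x_i=\sum_{v_j\sim v_i} f(d_i,d_j)\,x_j\qquad(i\in[n]).$$
Throughout I would use the Rayleigh characterization that for every $T'\in\mathcal T_n$ and every positive vector $\mathbf y$ one has $\rho_f(T')\ge \mathbf y^\top A_f(T')\mathbf y/(\mathbf y^\top\mathbf y)$, with strict inequality unless $\mathbf y$ is a Perron vector of $T'$. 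The observation driving everything is that a tree is a star or a double star if and only if its diameter is at most $3$, so it suffices to prove $\operatorname{diam}(T)\le 3$.

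Arguing by contradiction, suppose $\operatorname{diam}(T)\ge4$ and fix a longest path $v_0v_1\cdots v_d$ with $d\ge4$. Maximality of the path forces $v_0,v_d$ to be leaves and forces every neighbour of $v_1$ other than $v_2$ (and of $v_{d-1}$ other than $v_{d-2}$) to be a leaf. The move I would use is an edge-relocation (grafting): after comparing the two ends through $\mathbf x$---say $x_{v_1}\ge x_{v_{d-1}}$---I would detach the leaves hanging at $v_{d-1}$ and reattach them at $v_1$, obtaining a tree $T'\in\mathcal T_n$. Feeding $\mathbf x$ into the Rayleigh quotient for $T'$ yields $\rho_f(T')\ge \mathbf x^\top A_f(T')\mathbf x/(\mathbf x^\top\mathbf x)$, so it is enough to prove the strict increase $\mathbf x^\top\big(A_f(T')-A_f(T)\big)\mathbf x>0$; a single such move already contradicts the extremality of $T$.

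Expanding this difference edge by edge is where the hypotheses on $f$ enter. Relocating the leaves changes the degrees of the two hubs: $d_{v_{d-1}}$ drops and $d_{v_1}$ rises, so \emph{every} edge incident to these two vertices is reweighted. Monotonicity of $f$ in its first variable says that raising a degree can only increase the incident weights and lowering a degree can only decrease them, so the edges at $v_1$ help us while the edges at $v_{d-1}$ hurt us. To show the gain dominates the loss I would invoke convexity in the form of increasing differences: for $a\le b$ and fixed $c$,
$$f(b+1,c)-f(b,c)\ \ge\ f(a+1,c)-f(a,c),$$
so the marginal weight gained by incrementing the larger degree outweighs the marginal weight lost by decrementing the smaller one. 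Combining these term-by-term estimates with the eigen-equation above (to convert the eigenvector products at the relocated leaves into a clean comparison between the two ends) should give $\mathbf x^\top\big(A_f(T')-A_f(T)\big)\mathbf x>0$.

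The step I expect to be the main obstacle is exactly this balancing of the reweighting. For the ordinary adjacency matrix relocating a leaf changes no other weight, and a single eigenvector comparison settles the sign; here the two hubs are coupled, the moved leaves carry edge weights $f(\cdot,1)$ that must be tracked, and---most delicately---the eigenvector ordering $x_{v_1}\ge x_{v_{d-1}}$ need not agree with the degree ordering that convexity wants. I would resolve this by first proving an auxiliary lemma that isolates the effect of moving a single pendant edge between two vertices $u,w$ and records an explicit sign condition for $\rho_f(T')-\rho_f(T)>0$ in terms of $x_u,x_w,d_u,d_w$ and $f$; the reduction would then follow by choosing the favourable direction at each step and iterating until no path of length $4$ survives, with the degenerate case $d=4$ (where $v_2=v_{d-2}$, so both branches hang from a common vertex) handled directly.
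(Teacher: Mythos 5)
The paper does not prove this statement itself (it is quoted from \cite{li2021trees}), but the route it relies on is visible in Section 6: Lemma \ref{kelman} (the Kelmans operation strictly increases $\rho_f$) plus Theorem \ref{thm4} (hence $P_5$ is forbidden in the extremal graph), which for trees gives diameter at most $3$. Your overall target --- reduce to $\operatorname{diam}(T)\le 3$ via a Perron-vector/Rayleigh-quotient comparison using monotonicity and convexity of $f$ --- is the right one, and your reduction ``star or double star $\iff$ diameter $\le 3$'' is fine.

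The genuine gap is exactly the step you flag as the main obstacle, and it is not merely delicate: the move you propose does not close. If you relocate the pendant neighbours of $v_{d-1}$ to $v_1$ along a longest path with $d\ge 5$, the emptied hub $v_{d-1}$ drops to degree $1$, so the surviving edge $v_{d-1}v_{d-2}$ is reweighted from $f(d_{v_{d-1}},d_{v_{d-2}})$ down to $f(1,d_{v_{d-2}})$, contributing a loss of $\bigl(f(d_{v_{d-1}},d_{v_{d-2}})-f(1,d_{v_{d-2}})\bigr)x_{v_{d-1}}x_{v_{d-2}}$ to $\mathbf{x}^{\top}\bigl(A_f(T')-A_f(T)\bigr)\mathbf{x}$. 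Your gains carry the factors $x_s$ (moved leaves) and $x_ux_j$ for $j\sim v_1$; none of them carries the factor $x_{v_{d-2}}$, so ``increasing differences'' has nothing to pair this loss against, and choosing the direction by comparing $x_{v_1}$ with $x_{v_{d-1}}$ controls only the moved-edge terms, not this one. The reason the Kelmans operation works is structural: one relocates between two vertices $u,v$ at distance $2$ with a \emph{common} neighbour $c$ (for a tree, the two neighbours of the centre of any $P_5$), so the downweighted edge $uc$ is paired with the upweighted edge $vc$ at the same vertex $c$; since the degree lost at $u$ equals the degree gained at $v$ and $d_v\ge d_u'$, convexity gives $f(d_v',d_c)-f(d_v,d_c)\ge f(d_u,d_c)-f(d_u',d_c)$ and the common factor $x_c$ makes the cancellation legitimate. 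Note also that this operation must relocate entire branches, not just leaves, since the two distance-$2$ hubs of an arbitrary $P_5$ need not have pendant off-path neighbours. Your sketch coincides with this only in the degenerate case $d=4$; for $d\ge 5$ you would need to replace your end-to-end grafting by the distance-$2$ (Kelmans) version, after which the argument does go through.
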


\begin{thm}[\cite{li2021trees}]
Assume that $f(x,y)>0$ has a form $P(x,y)$ or $\sqrt{P(x,y)}$, where $P(x,y)$ is a symmetric polynomial with nonnegative coefficients and zero constant term. Then the extremal tree with the smallest $\rho_f$ in $\mathcal{T}_n$ ($n\geq9$) is the path $P_n$. 
\end{thm}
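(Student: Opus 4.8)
The plan is to show that every tree $T\in\mathcal{T}_n$ with $T\neq P_n$ admits a local transformation that strictly decreases $\rho_f$ and moves the tree ``closer'' to a path, so that iterating the transformation terminates at $P_n$. The starting observation is that $P_n$ is the unique tree on $n$ vertices with maximum degree $2$; hence any $T\neq P_n$ has a vertex $v$ with $d_G(v)\ge 3$ and at least three leaves. I would fix a longest (diametral) path of $T$, locate a branch vertex $v$ off it carrying a pendant path $Q=v\,b_1b_2\cdots b_q$, together with a neighbour of $v$ beginning a path $P=v\,a_1a_2\cdots a_p$ that reaches a leaf $a_p$ with $p\ge q$, and then relocate the entire pendant path $Q$ from $v$ to the far end $a_p$.

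\textbf{The grafting lemma.} The technical core is to prove that the tree $T'=T-vb_1+a_pb_1$ obtained by this relocation satisfies $\rho_f(T')<\rho_f(T)$. The effect on degrees is local and explicit: $d(v)$ drops by one, $d(a_p)$ rises from $1$ to $2$, and all other degrees are unchanged. I would compare the two spectral radii through the Rayleigh quotient, using the Perron eigenvector $y$ of the \emph{straighter} tree $T'$ as a test vector for $T$, so that
\[
\rho_f(T)-\rho_f(T')\;\ge\;\frac{y^{\top}\bigl(A_f(T)-A_f(T')\bigr)y}{y^{\top}y},
\]
with strictness coming from the fact that $y$ is not the Perron vector of $T$. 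It remains to show $y^{\top}\bigl(A_f(T)-A_f(T')\bigr)y\ge 0$. Expanding over the finitely many edges whose weight or endpoints change, the favourable (positive) contributions are the edges incident to $v$, each of whose weight strictly drops from $f(d(v),\cdot)$ to $f(d(v)-1,\cdot)$, together with the removed edge $vb_1$; the unfavourable (negative) contributions come from the lengthened end of $P$, where the weight rises from $f(\cdot,1)$ to $f(\cdot,2)$, and from the reattachment edge. That $f$ is increasing in each variable — guaranteed by the nonnegative coefficients and zero constant term of $P$, both for $f=P$ and for $f=\sqrt{P}$ — is exactly what fixes the signs of these increments.

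\textbf{Making the sign definite.} To conclude, the weight bookkeeping must be combined with size comparisons among the Perron components at the affected vertices, read off from the eigen-equation
\[
\rho_f(G)\,x_u=\sum_{w\sim u}f(d_u,d_w)\,x_w .
\]
Restricted to pendant paths this forces a geometric-type decay of $x$ toward leaves once $\rho_f$ is bounded below by a positive constant (of order $2f(2,2)$ for long paths), so the unfavourable terms, which sit near the leaf $a_p$, are damped relative to the favourable terms carried by $v$ and its neighbours. Quantifying this decay so that it dominates the bounded weight perturbation is precisely where the hypothesis $n\ge 9$ enters: the relevant paths must be long enough. The short-path configurations, where the decay estimate is too weak and a naive term-by-term bound can genuinely fail (for instance the convexity-type inequality $f(3,2)+f(2,1)\ge 2f(2,2)$ holds for $P$ a polynomial but can fail for $\sqrt{P}$), I would dispose of by direct comparison of the weighted characteristic polynomials of the finitely many small trees involved.

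\textbf{Assembly and main obstacle.} Each grafting step strictly lowers $\rho_f$ and decreases the number of leaves by exactly one (the leaf $a_p$ becomes internal while $v$ stays non-pendant), so after finitely many steps the tree has exactly two leaves and is $P_n$; hence $\rho_f(T)>\rho_f(P_n)$ for every $T\neq P_n$. The step I expect to be the main obstacle is the grafting lemma itself, and specifically the fact that, unlike for the ordinary adjacency matrix, relocating an edge perturbs the weights of \emph{untouched} edges through the change in $d(v)$; the clean Kelmans-type quadratic-form identity must therefore be reconciled with the $f$-monotonicity (and convexity) inequalities and with the Perron-decay estimates, and guaranteeing that this reconciliation holds \emph{uniformly} over all admissible $f$ and all trees (possibly after first reducing to caterpillars and choosing the relocated path and its target with care) is the delicate part of the argument.
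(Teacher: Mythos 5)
First, a point of order: this paper does not prove the statement at all --- it is quoted from \cite{li2021trees} as background --- so there is no in-house proof to compare against, and your attempt has to be judged on its own terms. Judged so, it has a genuine gap, and it sits exactly where you predict: the grafting lemma is asserted rather than proved, and the estimates you would need to close it are either incomplete or false as stated. Concretely: (i) the sign bookkeeping for $y^{\top}\bigl(A_f(T)-A_f(T')\bigr)y$ is not just ``favourable at $v$, unfavourable near $a_p$'' --- besides the negative term $\bigl(f(d_{a_{p-1}},1)-f(d_{a_{p-1}},2)\bigr)y_{a_{p-1}}y_{a_p}$ from the lengthened end, whenever $a_p$ is adjacent to $v$ (unavoidable for stars and short-legged spiders, where every path from $v$ to a leaf has length $1$) the edge $va_p$ changes weight from $f(d_v,1)$ to $f(d_v-1,2)$, which has no definite sign under mere monotonicity of $f$; (ii) the decay estimate you invoke to damp the unfavourable terms is premised on $\rho_f$ being bounded below by a constant of order $2f(2,2)$, but that lower bound comes from the presence of a cycle (see the Remark following Lemma~\ref{lem2}) and fails for trees --- indeed $\rho_f(P_n)<2f(2,2)$, and below that threshold the recursion $\rho x_i=f(2,2)(x_{i-1}+x_{i+1})$ along a pendant path has sinusoidal solutions, so $y_{a_p}/y_v$ is only of order $1/(p+1)$ rather than geometrically small; (iii) $n\ge 9$ does not make the relevant paths long (stars and spiders with legs of length $1$ or $2$ exist for every $n$), so the exceptional configurations are not ``finitely many small trees'', and even if they were, checking them ``by direct comparison of characteristic polynomials'' is not a finite verification because $f$ ranges over an infinite family of admissible weight functions. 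Finally, your argument only ever uses that $f$ is increasing, whereas the theorem's hypothesis ($f=P$ or $\sqrt{P}$ with $P$ a symmetric polynomial with nonnegative coefficients and zero constant term) is far more restrictive; a proof that never touches this hypothesis should be treated with suspicion.

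For contrast, the way the present paper handles the analogous minimization problems (Theorems~\ref{thm52}--\ref{thm55} for bicyclic graphs) is not by local grafting but by a global certificate: the generalized Lu--Man method produces, for the candidate minimizer $G'$ and the value $\alpha=\alpha(G)$ attached to a competitor $G$, a weighted incidence matrix witnessing that $G'$ is strictly $\alpha$-subnormal, using the closed form $F_\theta$ for the recursion along internal paths (Lemma~\ref{lem2}) together with the convexity of $F_\theta$. If you want a route to the tree theorem in the spirit of this paper, that is the machinery to adapt; if you insist on grafting, the key lemma must be proved with genuinely quantitative control of the Perron components near the reattachment site and with the polynomial structure of $f$ doing real work, and as written your proposal supplies neither.
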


\par If $f(x,y)>0$ is a symmetric real function, increasing and convex in variable $x$ and for any $x_1+y_1=x_2+y_2$ and $|x_1-y_1|>|x_2-y_2|$, $f(x_1,y_1)\geq f(x_2,y_2)$, then $f$ is called to have property $P^*$, which is a further restriction on the function $f(x,y)$. Zheng et al. \cite{zheng2023extremal} obtained the extremal unicyclic graph when $f(x,y)$ has the property $P^*$. 

\begin{thm}[\cite{zheng2023extremal}]
Assume that $f(x,y)$ has the property $P^*$. Among all unicyclic graphs of order $n\geq7$, $C_n$ is the unique unicyclic graph with the smallest $\rho_f$, $S_n+e$ is the unique unicyclic graph with the largest $\rho_f$. 
\end{thm}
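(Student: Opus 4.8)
The plan is to work throughout with the Perron–Frobenius framework. Since $G$ is connected, $A_f(G)$ is a nonnegative irreducible symmetric matrix, so $\rho_f(G)$ is a simple eigenvalue admitting a positive unit eigenvector $\mathbf{x}=(x_1,\dots,x_n)^{T}$, characterized by $\rho_f(G)\,x_i=\sum_{v_j\sim v_i}f(d_i,d_j)\,x_j$ for every $i$, together with the Rayleigh quotient
\[
\rho_f(G)=\max_{\mathbf{y}\neq \mathbf 0}\frac{2\sum_{v_iv_j\in E}f(d_i,d_j)\,y_iy_j}{\sum_i y_i^{2}}.
\]
The one structural feature that makes this harder than the ordinary adjacency case is that each weight $f(d_i,d_j)$ depends on the degrees, so deleting or relocating a single edge simultaneously perturbs the weights of \emph{all} edges incident to the affected vertices; hence the classical edge-monotonicity of the spectral radius is unavailable. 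Property $P^{*}$ is precisely the hypothesis that repairs this: monotonicity and convexity in $x$ control the weight change when a degree goes up, while the balance clause ($f(x_1,y_1)\ge f(x_2,y_2)$ whenever $x_1+y_1=x_2+y_2$ and $|x_1-y_1|>|x_2-y_2|$) controls the change when degree mass is merely redistributed at a fixed total.

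For the maximiser I would reduce an arbitrary unicyclic $G$ to $S_n+e$ through a short chain of transformations, each shown to \emph{strictly increase} $\rho_f$. First, a grafting step turns every tree hanging off the cycle into a bundle of pendant edges at its attachment vertex; this is the weighted analogue of the concentration step behind the Li et al.\ tree theorem, and convexity plus monotonicity of $f$ give the sign of $\rho_f(G')-\rho_f(G)$ after comparing against the Perron vector. Second, a pendant-shifting step moves all pendant edges onto the cycle vertex carrying the largest Perron entry; here the eigen-equation comparison $\rho_f(G')\ge \mathbf{x}^{T}A_f(G')\mathbf{x}/\mathbf{x}^{T}\mathbf{x}$ with the old Perron vector $\mathbf{x}$, read off edge-by-edge, yields the increase. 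Third, a cycle-shrinking step collapses the (now pendant-decorated) cycle to a triangle; since this raises the degree of the retained hub and rebalances the cycle edges, the convexity and balance parts of $P^{*}$ together force $\rho_f$ up. The endpoint of this process is forced to be $S_n+e$, and strictness of each step (guaranteed by positivity of the Perron vector and strict inequalities of $P^{*}$ at the relevant integer degree points) gives uniqueness.

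For the minimiser I would first record that $C_n$ is $2$-regular, so $A_f(C_n)=f(2,2)\,A(C_n)$ and $\rho_f(C_n)=2f(2,2)$, and then prove $\rho_f(G)>2f(2,2)$ for every unicyclic $G\neq C_n$. The mechanism is the reverse of the maximiser's: I would repeatedly apply an order-preserving \emph{cycle-growing} move that detaches a pendant vertex and reinserts it to subdivide a cycle edge, thereby lengthening the unique cycle by one and lowering the degree of a cycle vertex of degree $\ge 3$ toward the uniform value $2$. The balance clause of $P^{*}$ is the workhorse: pushing the degree distribution on and around the cycle toward all-$2$'s strictly decreases the affected edge weights, while the accompanying combinatorial reshaping (tracked again through the Rayleigh quotient and the subdivision effect studied in this paper) does not increase the spectral radius. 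Iterating removes every pendant and terminates at $C_n$, and the strict decrease at each move yields uniqueness.

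The main obstacle, in both halves, is establishing the \emph{sign} of $\rho_f(G')-\rho_f(G)$ for a single transformation when several edge weights move at once. The clean estimate $\rho_f(G')\ge \mathbf{x}^{T}A_f(G')\mathbf{x}/\|\mathbf{x}\|^{2}$ (and its mirror for decreases) reduces this to a finite sum $\mathbf{x}^{T}\bigl(A_f(G')-A_f(G)\bigr)\mathbf{x}$, but extracting the right sign requires pairing each weight increase or decrease with the correct ordering of Perron entries and then invoking monotonicity, convexity, and the balance condition simultaneously. The genuinely delicate case is the cycle-shrinking/growing step, where the degree of a cycle vertex and the weights of its incident edges change together; keeping these coupled changes under control — rather than any single routine inequality — is where the real work of the proof lies.
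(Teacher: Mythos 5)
First, note that the paper does not prove this theorem at all: it is quoted from \cite{zheng2023extremal} as a known result, so there is no in-paper proof to match. Judged on its own terms, your proposal is a strategy outline rather than a proof. Every step whose correctness actually needs to be established --- the sign of $\rho_f(G')-\rho_f(G)$ for the grafting, pendant-shifting, cycle-shrinking and cycle-growing moves --- is asserted via phrases like ``convexity plus monotonicity give the sign'' and never computed, and your own closing paragraph concedes that this is ``where the real work of the proof lies.'' In particular, for the maximiser the standard tool that makes all three of your transformations go through under property $P^*$ is the Kelmans operation (Lemma \ref{kelman} of this paper, strengthened under $P^*$ to allow adjacent vertices); without it, your pendant-shifting step changes the degrees of two vertices at once, so the Rayleigh-quotient difference $\mathbf{x}^{T}(A_f(G')-A_f(G))\mathbf{x}$ contains terms of both signs and the balance clause must be applied at specific integer degree pairs --- none of which you carry out.

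For the minimiser there is a concrete error of mechanism, not just an omission. You justify the cycle-growing move (delete a pendant, re-insert it to subdivide a cycle edge) by saying that pushing degrees toward $2$ ``strictly decreases the affected edge weights,'' but subdivision adds a vertex and an edge, so it is not a weight-monotone comparison and can \emph{increase} the spectral radius for a badly chosen edge. The correct statement --- and the one this paper actually proves in Lemma \ref{lem1} and Theorem \ref{thm1} --- is that there exists \emph{some} edge of the cycle whose subdivision does not increase $\rho_f$, and that edge must be selected using the Perron vector (the hypothesis $y_v\le y_{u_i}$ with $y_i=f(d_i,2)x_i$), not a degree condition. Combined with the strict decrease from pendant deletion (Lemma \ref{subg}), this is exactly the argument of Theorem \ref{thm51}, which needs only monotonicity of $f$, not the balance clause of $P^*$. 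So your minimiser half has the right overall shape but attributes the decrease to the wrong hypothesis and skips the Perron-vector selection that makes the subdivision step valid; your maximiser half is a plan whose every inequality remains to be proved.
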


\par Let $G_1$ be the graph obtained from the unique bicyclic graph of order 5 by joining $n-4$ isolated vertices to a vertex of degree 3. Ye et al. \cite{ye2023extremal} obtained that $G_1$ is the extremal bicyclic graph of order $n$ with the smallest $\rho_f$ when $f(x,y)$ has the property $P^*$. 

\begin{thm}[\cite{ye2023extremal}]
Let $G$ be a graph in $\mathcal{B}_n$, $n\geq4$. If $f(x,y)$ has the property $P^*$, then $\rho_f(G)\leq\rho_f(G_1)$, with equality holds if and only if $G\cong G_1$. 
\end{thm}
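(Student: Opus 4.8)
The plan is to characterize the extremal graph through a sequence of local transformations, each of which strictly increases $\rho_f$, and which drive an arbitrary $G\in\mathcal{B}_n$ toward $G_1$. Throughout I would work with the Perron vector. Since $G$ is connected, $A_f(G)$ is a nonnegative irreducible matrix, so by Perron--Frobenius it admits a positive unit eigenvector $x=(x_1,\dots,x_n)^\top$ for $\rho_f(G)$, and
$$\rho_f(G)=x^\top A_f(G)\,x=2\!\!\sum_{v_iv_j\in E(G)}\!\!f(d_i,d_j)\,x_ix_j.$$
For any graph $G'$ on the same vertex set, the Rayleigh quotient gives $\rho_f(G')\ge x^\top A_f(G')\,x$, with strict inequality whenever the positive vector $x$ is not a Perron vector of $G'$. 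Comparing $x^\top\bigl(A_f(G')-A_f(G)\bigr)x$ is therefore the basic tool, and I fix a vertex $u$ with $x_u=\max_i x_i$.

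The first group of steps concentrates the tree part of $G$ at one vertex. I would prove a relocation lemma: if $v\ne u$ carries a pendant edge (or, more generally, a pendant branch) and $x_u\ge x_v$, then detaching that branch from $v$ and reattaching it at $u$ does not decrease $\rho_f$, and strictly increases it when $x_u>x_v$. The subtlety is that moving an edge changes $d_u$ and $d_v$ and hence perturbs the weights $f(d_u,\cdot)$ and $f(d_v,\cdot)$ on \emph{every} edge incident to $u$ and to $v$ simultaneously. Property $P^*$ is exactly what controls this: monotonicity in the first variable makes the weights at the growing vertex $u$ increase, convexity bounds the loss at the shrinking vertex $v$, and the spread condition (for fixed sum, $|x_1-y_1|>|x_2-y_2|$ forces $f(x_1,y_1)\ge f(x_2,y_2)$) guarantees the net change of $x^\top A_f x$ is nonnegative. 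Iterating this forces the extremal graph to consist of a bicyclic kernel together with pendant edges all attached to a single vertex $u^\ast$; maximizing then makes $u^\ast$ dominant, with $d_{u^\ast}=n-1$.

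It remains to pin down the kernel. A bicyclic graph reduces, by repeatedly deleting pendants, to one of three base types: two cycles sharing a vertex, a $\theta$-graph, or two cycles joined by a path. I would show that, subject to carrying the dominating vertex $u^\ast$, each of these can be transformed into the diamond $K_4-e$ (two triangles sharing an edge, with $u^\ast$ an endpoint of the shared edge) without decreasing $\rho_f$: shortening any cycle through $u^\ast$ and collapsing a bridging path toward $u^\ast$ both raise the quotient, once more by property $P^*$, since they shift weight onto edges incident to the high-Perron, high-degree vertex. After these reductions the only surviving candidate is precisely $G_1$, and a direct Rayleigh comparison settles both the strict inequality and the equality case $G\cong G_1$.

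The step I expect to be the main obstacle is the relocation lemma, and indeed every weight-bookkeeping argument, because the degree dependence of $f$ means that a single edge move alters the weights of a whole neighborhood at once. Unlike the ordinary adjacency matrix, one cannot isolate the effect of the moved edge; the positivity of the net change hinges on combining all three ingredients of property $P^*$ simultaneously, and making this simultaneous control rigorous --- together with the correct equality analysis needed for the ``if and only if'' --- is where the real work lies.
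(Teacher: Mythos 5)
First, a point of reference: this theorem is quoted from \cite{ye2023extremal}; the present paper contains no proof of it. What the paper does contain is the machinery its authors consider to be the actual route: the Kelmans operation (Lemma \ref{kelman}), the observation that under property $P^*$ the hypothesis $u\nsim v$ in that lemma can be dropped, and the resulting forbidden-subgraph analysis (Theorem \ref{thm4} together with $P_4$ and $C_4$ becoming forbidden), which forces the base to be $\Theta(1,2,2)$ and all remaining vertices to be pendants at a single degree-$3$ vertex of the base.

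Measured against that, your proposal has a genuine gap, and it sits exactly where you predict: the relocation lemma. As stated --- move a pendant branch from $v$ to $u$ whenever $x_u\ge x_v$ --- it is the classical shifting lemma for the unweighted adjacency matrix, where the Rayleigh-quotient change is literally $2x_w(x_u-x_v)\ge 0$. For $A_f$ the move changes $d_u$ and $d_v$ by $\pm1$, so the quantity to control is a difference of two sums over \emph{different} neighbourhoods, $\sum_{z\sim u}\bigl(f(d_u+1,d_z)-f(d_u,d_z)\bigr)x_ux_z$ versus $\sum_{z\sim v}\bigl(f(d_v,d_z)-f(d_v-1,d_z)\bigr)x_vx_z$, plus the moved-edge term $f(d_u+1,d_w)x_ux_w-f(d_v,d_w)x_vx_w$. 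Convexity in $x$ only aligns the increments when $d_u\ge d_v-1$, and $x_u\ge x_v$ says nothing about the Perron entries of the \emph{other} neighbours of $v$, which may dominate the sum on the loss side; the spread condition in $P^*$ compares values of $f$ at equal degree sums and does not apply to these increments at all. So the asserted positivity of the net change does not follow from the three ingredients of $P^*$ "combined simultaneously" --- no argument is given, and the hypothesis $x_u\ge x_v$ alone is too weak. The Kelmans operation avoids this precisely by moving \emph{all} private neighbours of one vertex to the other in a single step, which is why Lemma \ref{kelman} needs no Perron-entry comparison and yields a strict increase whenever $G'\ncong G$; iterating it and excluding $P_5$, $P_4$, $C_4$, $C_3\cdot P_3$, $\infty^*(3,3)$, $\Theta(1,2,3)$ pins down $G_1$ (and the equality case) directly. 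Your second stage (reducing the kernel to $K_4-e$ by "shortening cycles") is likewise asserted rather than argued, but it would come for free from the forbidden-subgraph route.
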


\par Given an edge $e$ of $G$, denote $G_e$ be the graph obtained by subdividing $e$ on $G$. Li et al. \cite{li2023some} obtained uniform interlacing inequalities under edge subdivision operation. 
\begin{thm}[\cite{li2023some}]\label{sdv}
Let $G$ be a graph of order $n$ and $H=G_e$, where $e=uv$ is an edge of $G$. Let $f(x,y)$ be any symmetric real function and the edge-weight $f(d_i,d_j)\geq0$ for any edge $v_iv_j\in E(G)$. If $\lambda_1\geq\lambda_2\geq\dots\geq\lambda_n$ and $\theta_1\geq\theta_2\geq\dots\geq\theta_{n+1}$ are the eigenvalues of $A_f(G)$ and $A_f(H)$, respectively, then $$\lambda_{i-2}\geq\theta_i\geq\lambda_{i+1}$$ for each $i=1,2,\dots,n+1$, where $\lambda_i=+\infty$ for each $i\leq0$ and $\lambda_i=-\infty$ for each $i\geq n-1$. 
\end{thm}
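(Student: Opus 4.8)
The plan is to realize the comparison between $A_f(H)$ and $A_f(G)$ as a chain of two standard eigenvalue-interlacing arguments that pass through a common intermediate $n\times n$ matrix. The first thing I would verify is that subdividing $e=uv$ leaves every degree of $G$ unchanged apart from creating one new vertex $w$ of degree $2$: indeed $u$ trades its neighbour $v$ for $w$ and $v$ trades $u$ for $w$, so $d_H(u)=d_G(u)$, $d_H(v)=d_G(v)$, and all remaining degrees are untouched. Consequently every edge-weight among the original vertices is identical in $G$ and $H$, and if I order the vertices of $H$ so that $w$ is last, then $A_f(H)$ has the block form $\begin{pmatrix} M & c \\ c^{\mathsf T} & 0\end{pmatrix}$, where $M$ is the $n\times n$ matrix obtained from $A_f(G)$ by deleting only the weight of $uv$ (that is, zeroing the $(u,v)$ and $(v,u)$ entries), and $c$ is supported on $\{u,v\}$ with $c_u=f(d_u,2)$, $c_v=f(d_v,2)$.

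First I would apply Cauchy's interlacing theorem to the principal submatrix $M$ of $A_f(H)$, which is exactly the matrix obtained by deleting the row and column of $w$. Writing $\mu_1\ge\cdots\ge\mu_n$ for the eigenvalues of $M$, this yields $\mu_i\le\theta_i\le\mu_{i-1}$ for every $i$, with the usual conventions $\mu_0=+\infty$ and $\mu_{n+1}=-\infty$ covering the two end cases.

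Next I would compare this same $M$ with $A_f(G)$. By the set-up, $A_f(G)=M+W$ where $W=f(d_u,d_v)\,(e_u e_v^{\mathsf T}+e_v e_u^{\mathsf T})$ is the rank-two correction that restores the weight of $uv$. The crucial observation — which I expect to carry the real content — is that $W$ is indefinite with exactly one positive and one negative eigenvalue, namely $\pm f(d_u,d_v)$, with eigenvectors proportional to $e_u\pm e_v$; thus $W=W_+-W_-$ with $W_+,W_-\succeq 0$ each of rank one. Feeding this splitting into Weyl's inequalities — adding a rank-one positive semidefinite matrix moves the ordered spectrum up by at most one index, while subtracting one moves it down by at most one — yields the symmetric two-sided shift $\lambda_{j+1}\le\mu_j\le\lambda_{j-1}$, where $\lambda_1\ge\cdots\ge\lambda_n$ are the eigenvalues of $A_f(G)$. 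Treating $W$ merely as a generic rank-two perturbation would give only a shift of two indices on each side; it is precisely the one-positive-one-negative structure that pins the shift to a single index in each direction.

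Finally I would chain the two estimates. Combining $\theta_i\le\mu_{i-1}$ with $\mu_{i-1}\le\lambda_{i-2}$ gives the upper bound $\theta_i\le\lambda_{i-2}$, and combining $\theta_i\ge\mu_i$ with $\mu_i\ge\lambda_{i+1}$ gives $\theta_i\ge\lambda_{i+1}$, which together are exactly the asserted $\lambda_{i-2}\ge\theta_i\ge\lambda_{i+1}$. The degenerate indices are absorbed by the stated $\pm\infty$ conventions for $\lambda_i$ with $i$ outside $\{1,\dots,n\}$, which make the boundary cases $i=1,2$ (upper bound) and $i=n,n+1$ (lower bound) automatically true. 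The only genuinely delicate points are the bookkeeping of the two index shifts and the confirmation that $W$ splits as a rank-one positive semidefinite matrix minus another; everything else is a direct invocation of Cauchy and Weyl.
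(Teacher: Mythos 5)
Your argument is correct: the block decomposition of $A_f(H)$ with the subdivision vertex $w$ last, the observation that all original degrees (hence all surviving edge-weights) are preserved, Cauchy interlacing against the principal submatrix $M$, and the splitting of the rank-two correction $W=f(d_u,d_v)(e_ue_v^{\mathsf T}+e_ve_u^{\mathsf T})$ into a difference of two rank-one positive semidefinite matrices fed into Weyl's inequalities all check out, and the index bookkeeping in the final chaining is right. Note that the paper itself states this theorem as a quoted result from \cite{li2023some} and gives no proof, so there is nothing internal to compare against; your proof is the standard route for such subdivision-interlacing results, and it in fact yields the bound with the natural convention $\lambda_i=-\infty$ only for $i\geq n+1$, which is slightly stronger than the (presumably mistyped) convention $i\geq n-1$ in the statement.
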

\par The remainder of this paper is organized as follows: In Section 2, the generalized `Lu-Man' method and other basic results are listed. In Section 3, the effect of subdividing a specific edge on $f$-spectral radius is discussed. In Sections 4 and 5, we derive some necessary conditions for the extremal graph with the smallest and largest $\rho_f$, respectively. The bicyclic graphs with the smallest $\rho_f$ are also determined in Section 4.

\section{Preliminaries}

Let $G$ be a connected graph and $H$ be a proper subgraph of $G$. If $f(x,y)>0$ is a symmetric real function, increasing in variable $x$, then $\rho_f(H)<\rho_f(G)$ holds by the following lemma. 
\begin{lem}[\cite{brouwer2012spectra}]\label{subg}
Let $A$, $B$ be nonnegative matrices and $A<B$. If $B$ is irreducible, then $\rho(A)<\rho(B)$. 
\end{lem}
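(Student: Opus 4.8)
The plan is to derive this from Perron--Frobenius theory, reading $A<B$ as the entrywise relation $0\le A\le B$ with $A\ne B$. Since $B$ is nonnegative and irreducible, Perron--Frobenius tells us that $\rho(B)$ is a positive eigenvalue carrying a strictly positive left eigenvector $y^\top>0$ with $y^\top B=\rho(B)\,y^\top$, and a strictly positive right Perron eigenvector as well. On the other side, because $A\ge0$, its spectral radius $\rho(A)$ is itself an eigenvalue of $A$ with some nonnegative (but possibly not strictly positive) eigenvector $x\ge0$, $x\ne0$, so $Ax=\rho(A)x$. The idea is to compare the two spectral radii by sandwiching the bilinear form $y^\top(\,\cdot\,)x$ between $A$ and $B$, and then to upgrade the resulting weak inequality to a strict one.

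First I would establish the weak inequality. Pairing $y$ on the left and $x$ on the right and using $A\le B$ gives
$$\rho(A)\,(y^\top x)=y^\top A x\le y^\top B x=\rho(B)\,(y^\top x).$$
Because $y>0$ and $x\ge0$, $x\ne0$, the scalar $y^\top x$ is strictly positive, so dividing yields $\rho(A)\le\rho(B)$. Getting the strict inequality is the real content, and I would argue by contradiction: assume $\rho(A)=\rho(B)=:\rho$. Then the displayed chain collapses. Moreover $Bx\ge Ax=\rho x$, so $Bx-\rho x\ge0$, while $y^\top(Bx-\rho x)=\rho\,y^\top x-\rho\,y^\top x=0$. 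A nonnegative vector whose $y$-weighted sum vanishes must itself vanish (as $y>0$), hence $Bx=\rho x$, equivalently $(B-A)x=0$. Thus $x$ is a Perron eigenvector of the irreducible matrix $B$, which forces $x>0$. But then $0=\big((B-A)x\big)_i=\sum_j(B-A)_{ij}x_j$ with every summand nonnegative and each $x_j>0$ forces $(B-A)_{ij}=0$ for all $i,j$, i.e. $A=B$, contradicting $A\ne B$.

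I expect the main obstacle to be exactly this passage from the weak to the strict inequality, and specifically the fact that the eigenvector $x$ furnished for $A$ need only be nonnegative. If $x$ had a zero coordinate at a position where $B-A$ is supported, the pairing could vanish without any contradiction, so the naive argument stalls. The device that rescues it is the observation that under the equality assumption $x$ is automatically promoted to a strictly positive Perron vector of $B$ (using irreducibility of $B$), after which strict positivity of both $x$ and $y$ makes the vanishing of $(B-A)x$ impossible unless $B-A=0$. It is worth emphasizing that irreducibility is invoked only for $B$; the matrix $A$ may well be reducible, which is precisely why one cannot simply start from a strictly positive eigenvector for $A$.
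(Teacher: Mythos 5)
The paper does not prove this lemma---it is quoted verbatim from the reference \cite{brouwer2012spectra} and used as a black box---so there is no internal proof to compare against. Your argument is the standard Perron--Frobenius proof of exactly this statement and it is correct: the weak inequality via the positive left eigenvector of $B$ and the nonnegative right eigenvector of $A$, then the upgrade to strictness by showing that equality forces $Bx=\rho x$, hence $x>0$ by irreducibility of $B$, hence $(B-A)x=0$ forces $A=B$. Your reading of $A<B$ as ``$A\le B$ entrywise and $A\ne B$'' is also the right one for how the lemma is applied in the paper (comparing the $f$-adjacency matrix of a proper subgraph with that of the whole graph, where strict entrywise inequality could never hold because of the zero diagonals), and you correctly isolate the one genuinely delicate point, namely that the eigenvector of $A$ need not be positive a priori.
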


\par Wang et al. \cite{wang2023maximum} generalized the `Lu-Man' method developed in \cite{lu2016connected} for hypergraphs to weighted $k$-uniform hypergraphs, which can be applied to our case. 

\begin{definition}[\cite{wang2023maximum}]
Let $H$ be a connected weighted $k$-uniform hypergraph. If there exists a weighted incidence matrix $B$ satisfying: 
\begin{itemize}
\item[(1)] $\sum_{e\in E(v)}B(v,e)=1$ for any $v\in V(H)$ and $\prod_{v\in e}\frac{B(v,e)}{w_H(e)}=\alpha$ for any $e\in E(H)$ , then $H$ is called $\alpha$-normal; 
\item[(2)] $\sum_{e\in E(v)}B(v,e)\leq1$ for any $v\in V(H)$ and $\prod_{v\in e}\frac{B(v,e)}{w_H(e)}\geq\alpha$ for any $e\in E(H)$ , then $H$ is called $\alpha$-subnormal; $H$ is called strictly $\alpha$-subnormal if it is $\alpha$-subnormal but not $\alpha$-normal; 
\item[(3)] $\sum_{e\in E(v)}B(v,e)\geq1$ for any $v\in V(H)$ and $\prod_{v\in e}\frac{B(v,e)}{w_H(e)}\leq\alpha$ for any $e\in E(H)$ , then $H$ is called $\alpha$-supernormal; $H$ is called strictly $\alpha$-supernormal if it is $\alpha$-supernormal but not $\alpha$-normal. 
\end{itemize}
\par Moreover, the incidence matrix $B$ is called consistent if $\prod_{i=1}^l\frac{B(v_i,e_i)}{B(v_{i-1},e_i)}=1$ holds for any cycle $v_0e_1v_1\dots v_{l-1}e_lv_l$ ($v_l=v_0$) of $H$. In this case, we call $H$ consistently $\alpha$-normal (or correspondingly, consistently $\alpha$-supernormal, consistently and strictly $\alpha$-supernormal). 
\end{definition}

\begin{lem}[\cite{wang2023maximum}]\label{lmm}
Let $H$ be a connected weighted $k$-uniform hypergraph, where $k\geq2$ and order $n\geq3$. We have: 
\begin{itemize}
\item[(1)] $\rho(H)=\alpha^{-\frac1k}$ if and only if $H$ is consistently $\alpha$-normal; 
\item[(2)] If $H$ is $\alpha$-subnormal, then $\rho(H)\leq\alpha^{-\frac1k}$; 
\item[(3)] If $H$ is strictly $\alpha$-subnormal, then $\rho(H)<\alpha^{-\frac1k}$; 
\item[(4)] If $H$ is consistently $\alpha$-supernormal, then $\rho(H)\geq\alpha^{-\frac1k}$; 
\item[(5)] If $H$ is consistently and strictly $\alpha$-supernormal, then $\rho(H)>\alpha^{-\frac1k}$. 
\end{itemize}
\end{lem}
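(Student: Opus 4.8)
The plan is to ground everything in Perron--Frobenius theory for connected (hence weakly irreducible) nonnegative weighted $k$-uniform hypergraphs. First I would record that $\rho(H)$ admits a positive eigenvector $x=(x_v)_{v\in V(H)}$ satisfying, for every vertex $v$, the eigenequation $\rho(H)\,x_v^{k}=\sum_{e\in E(v)}w_H(e)\prod_{u\in e}x_u$. From this I extract two companion facts used repeatedly: summing over $v$ gives the global identity $\rho(H)\sum_{v}x_v^{k}=k\sum_{e\in E(H)}w_H(e)\prod_{u\in e}x_u$, and dividing the eigenequation by $x_v^k$ gives $\sum_{e\in E(v)}w_H(e)\,x_v^{-k}\prod_{u\in e}x_u=\rho(H)$. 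I would also invoke the Collatz--Wielandt lower bound $\rho(H)\ge\min_{v}\,y_v^{-k}\sum_{e\in E(v)}w_H(e)\prod_{u\in e}y_u$, valid for every positive $y$.

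For the subnormal statements (2) and (3) no consistency is needed, and this is the cleanest part. Using the eigenvector $x$ of $H$ itself, I would prove for each edge $e$ the edgewise inequality
\[
k\,w_H(e)\prod_{u\in e}x_u\;\le\;\alpha^{-1/k}\sum_{v\in e}B(v,e)\,x_v^{k},
\]
which follows by applying the AM--GM inequality to the $k$ positive numbers $B(v,e)x_v^{k}$ and then inserting the subnormal hypothesis $\prod_{v\in e}B(v,e)\ge\alpha\,w_H(e)^{k}$. Summing over all edges and regrouping the right-hand side by vertices converts $\sum_{e\in E(v)}B(v,e)$ into a factor bounded by $1$ through the subnormal row-sum condition; comparing the result with the global identity yields $\rho(H)\le\alpha^{-1/k}$. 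Strictness for (3) is then automatic: since every $x_v>0$, any strict inequality present in the hypotheses survives into the final estimate.

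The supernormal statements (4) and (5) are where consistency enters, and I expect the well-definedness of the reconstruction to be the main obstacle. Here the eigenvector of $H$ is useless because AM--GM points the wrong way for a lower bound, so instead I would build a test vector $y$ directly from $B$ by prescribing the ratios $y_{v'}/y_v=(B(v,e)/B(v',e))^{1/k}$ along each incidence. The consistency condition $\prod_i B(v_i,e_i)/B(v_{i-1},e_i)=1$ around every cycle is exactly what guarantees these prescriptions are compatible, so $y$ is well defined on all of $H$ (up to scaling). With this $y$, each quantity $w_H(e)\,y_v^{-k}\prod_{u\in e}y_u$ is proportional along $e$ to $B(v,e)$, and the identity $\prod_{v\in e}w_H(e)\,y_v^{-k}\prod_{u\in e}y_u=w_H(e)^k$ forces the proportionality constant to equal $\left(\prod_{v\in e}\frac{B(v,e)}{w_H(e)}\right)^{-1/k}\ge\alpha^{-1/k}$ by supernormality. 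The row sums $\sum_{e\in E(v)}B(v,e)\ge1$ then push every Collatz--Wielandt ratio above $\alpha^{-1/k}$, giving $\rho(H)\ge\alpha^{-1/k}$; strict supernormality makes one such inequality strict and hence the bound strict.

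Finally, for the equality characterization (1), the reverse implication falls out of (2) and (4): a consistently $\alpha$-normal $H$ is at once $\alpha$-subnormal and consistently $\alpha$-supernormal, whence $\rho(H)\le\alpha^{-1/k}\le\rho(H)$. For the forward implication I would exhibit the canonical incidence matrix $B^{\ast}(v,e)=w_H(e)\,x_v^{-k}\prod_{u\in e}x_u/\rho(H)$ built from the Perron eigenvector; the second companion fact gives its row sums as exactly $1$, a direct computation gives $\prod_{v\in e}B^{\ast}(v,e)/w_H(e)=\rho(H)^{-k}$, and the ratios $B^{\ast}(v,e)/B^{\ast}(v',e)=x_{v'}^{k}/x_v^{k}$ telescope to $1$ around every cycle, so $B^{\ast}$ witnesses consistent $\rho(H)^{-k}$-normality, i.e.\ $\alpha=\rho(H)^{-k}$. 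The genuinely delicate points are the Perron--Frobenius foundation (positivity of $x$ from weak irreducibility and the validity of the Collatz--Wielandt characterizations in the weighted-tensor setting) and the cycle-space argument certifying that the consistency relations determine $y$ without contradiction.
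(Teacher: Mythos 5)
The paper does not prove this lemma at all: it is quoted verbatim from the cited reference \cite{wang2023maximum} (the generalized Lu--Man method), and the paper only extracts from that external proof the definition of the principal weighted incidence matrix $B_H(v,e)=w_H(e)\boldsymbol{x}^e/(\rho x_v^k)$. So there is no in-paper argument to compare against. Judged on its own, your proof follows exactly the standard Lu--Man normal-labeling scheme and is essentially right: the AM--GM estimate $k\,w_H(e)\prod_{u\in e}x_u\le\alpha^{-1/k}\sum_{v\in e}B(v,e)x_v^k$ summed over edges gives (2)--(3); the consistency condition is precisely what makes the reconstructed test vector $y$ well defined (your check that the prescriptions are automatically compatible within a single hyperedge, and globally compatible by the cycle condition plus connectivity, is the right one); and your canonical $B^{\ast}$ is exactly the paper's $B_H$, with row sums $1$, edge products $\rho^{-k}w_H(e)^k$, and telescoping ratios, which settles (1).

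The one step you dispatch too quickly is the strictness in (5). Having each Collatz--Wielandt ratio $y_v^{-k}\sum_{e\in E(v)}w_H(e)\prod_{u\in e}y_u\ge\alpha^{-1/k}$ with strict inequality at \emph{some} vertex does not force $\min_v$ of these ratios to exceed $\alpha^{-1/k}$: the minimum may be attained at a vertex where equality holds. The correct finish is the equality-case refinement of Perron--Frobenius: from $\mathcal{A}y^{k-1}\ge\alpha^{-1/k}y^{[k-1]}$ entrywise with inequality strict in at least one coordinate and $\mathcal{A}$ weakly irreducible, pair against a positive left (for $k=2$) Perron vector, or invoke the known tensor analogue, to conclude $\rho>\alpha^{-1/k}$. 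Since this paper only ever applies the lemma with $k=2$ (weighted adjacency matrices), the classical matrix statement suffices and the gap is easily closed, but as written the sentence ``one such inequality strict hence the bound strict'' is not a proof. A symmetric remark applies to (3), but there your argument actually survives because the subnormal bound is obtained by summing over \emph{all} vertices and edges against the strictly positive Perron weights $x_v^k$, so a single strict inequality genuinely propagates.
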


From the proof of the above lemma from \cite{wang2023maximum}, we can define the principal weighted incidence matrix $B_H$ of $H$ as follows: 
$$B_H(v,e)=\begin{cases}
\frac{w_H(e)\boldsymbol{x}^e}{\rho x_v^k}, & \quad \text{if }v\in e, \\ 
0, & \quad \text{otherwise}, 
\end{cases}$$
where $\boldsymbol{x}$ is the principal eigenvector of $H$, $\boldsymbol{x}^e=\prod_{v\in e}x_v$. Denote $\alpha(H):=\rho^{-k}(H)$, so $H$ is consistently $\alpha(H)$-normal. 

\section{Subdividing an edge}
\par In this section, $f(x,y)>0$ is a symmetric real function, increasing in variable $x$ and we focus on the effect of subdividing an edge on the $f$-spectral radius. 
\par For an arbitrary edge $e$, we can only obtain $\rho_f(G_e)\geq\lambda_2(A_f(G))$ and $\rho_f(G)\geq\lambda_3(A_f(G_e))$ from Theorem \ref{sdv}. A specific edge $e$ is required to compare $\rho_f(G_e)$ and $\rho_f(G)$. 

\begin{lem}\label{lem1}
Assume that $f(x,y)>0$ is a symmetric real function, increasing in variable $x$. Given a connected graph $G$ and a vertex $v$ with $d_v\geq2$, denote $u_1,u_2$ be two neighbors of $v$. Let $\boldsymbol{x}$ be the principal eigenvector of $A_f(G)$ and $\boldsymbol{y}$ be a vector such that $y_i=f(d_i,2)x_i$ for every vertex $i$. If $d_{u_i}\geq 2$ and $y_v\leq y_{u_i}$ $(i=1,2)$, then $\rho_f(G_{vu_1})\leq\rho_f(G)$. 
\end{lem}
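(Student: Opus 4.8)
The plan is to apply the subnormality criterion of Lemma~\ref{lmm}(2) to the weighted graph $H=G_{vu_1}$, regarded as a weighted $2$-uniform hypergraph. Write $\rho=\rho_f(G)$ and $\alpha=\rho^{-2}$, and let $B_G$ be the principal weighted incidence matrix of $G$ associated with $\boldsymbol{x}$, so that $G$ is consistently $\alpha$-normal; concretely $B_G(v,vu_1)=\frac{f(d_v,d_{u_1})x_{u_1}}{\rho x_v}$ and $B_G(u_1,vu_1)=\frac{f(d_v,d_{u_1})x_v}{\rho x_{u_1}}$. If I can exhibit a weighted incidence matrix $B$ on $H$ that is $\alpha$-subnormal, then Lemma~\ref{lmm}(2) yields $\rho_f(H)\le\alpha^{-1/2}=\rho$, which is exactly the claim.

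The construction exploits that subdividing $vu_1$ (inserting a new vertex $w$ of degree $2$) leaves every vertex degree unchanged, so every edge weight is unchanged except that $vu_1$ is deleted and the two edges $vw,\,wu_1$, of weights $f(d_v,2)$ and $f(d_{u_1},2)$, are created. I therefore keep $B=B_G$ on all surviving edges, put $B(v,vw)=B_G(v,vu_1)$ and $B(u_1,wu_1)=B_G(u_1,vu_1)$, and finally choose $B(w,vw)$ and $B(w,wu_1)$ so that the product condition holds with equality $\alpha$ on each of $vw$ and $wu_1$. With these choices the product condition holds (with equality) on every edge of $H$, and the row-sum condition $\sum_{e\ni t}B(t,e)\le 1$ holds with equality at every vertex except $w$: at $v$ and $u_1$ the deleted contribution $B_G(\cdot,vu_1)$ is exactly replaced. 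Hence $\alpha$-subnormality reduces to the single inequality $B(w,vw)+B(w,wu_1)\le 1$ at $w$, which after clearing denominators becomes
\[
y_v^2+y_{u_1}^2\ \le\ \rho\,f(d_v,d_{u_1})\,x_v x_{u_1}.
\]

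Establishing this last inequality is the crux. The eigen-equation of $A_f(G)$ at $v$, keeping only the two nonnegative terms coming from $u_1$ and $u_2$, gives $\rho x_v\ge f(d_v,d_{u_1})x_{u_1}+f(d_v,d_{u_2})x_{u_2}$; multiplying by $f(d_v,d_{u_1})x_{u_1}$ bounds the right-hand side from below by $f(d_v,d_{u_1})^2x_{u_1}^2+f(d_v,d_{u_1})f(d_v,d_{u_2})x_{u_1}x_{u_2}$. It then suffices to dominate the two terms on the left separately. Since $d_v\ge 2$ and $f$ is increasing, $f(d_{u_i},2)=f(2,d_{u_i})\le f(d_v,d_{u_i})$ for $i=1,2$; this gives $y_{u_1}^2\le f(d_v,d_{u_1})^2x_{u_1}^2$ directly, and, combined with the hypotheses $y_v\le y_{u_1}$ and $y_v\le y_{u_2}$ (so that $y_v^2\le y_{u_1}y_{u_2}$), it gives $y_v^2\le f(d_v,d_{u_1})f(d_v,d_{u_2})x_{u_1}x_{u_2}$. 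Adding the two bounds proves the displayed inequality, and with it the $\alpha$-subnormality of $H$. I expect the main obstacle to be precisely this splitting step: one must route $y_{u_1}^2$ against the ``$u_1$-term'' and $y_v^2$ against the ``$u_2$-term'' of the eigen-equation, which is what forces the hypothesis to be imposed at the second neighbor $u_2$ as well as at $u_1$.
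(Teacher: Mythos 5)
Your proof is correct, but it takes a genuinely different route from the paper. The paper's proof is a direct test-vector argument: it extends the principal eigenvector $\boldsymbol{x}$ of $A_f(G)$ to $G_{vu_1}$ by setting $x'_w=x_v$, verifies $\rho\boldsymbol{x'}\geq A_f(G_{vu_1})\boldsymbol{x'}$ entrywise (which requires separate checks at $v$, $w$ and $u_1$), and concludes by Perron--Frobenius. You instead transplant the principal weighted incidence matrix $B_G$ onto $G_{vu_1}$ (legitimately, since subdivision preserves all degrees and hence all surviving edge weights), force the product condition to hold with equality on the two new edges, and thereby reduce the whole lemma to the single scalar inequality $y_v^2+y_{u_1}^2\leq\rho f(d_v,d_{u_1})x_vx_{u_1}$ at the new vertex $w$, which you then derive from the eigen-equation at $v$ together with $y_v\le y_{u_1}$, $y_v\le y_{u_2}$ and the monotonicity $f(d_{u_i},2)\le f(d_{u_i},d_v)$. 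I checked the computation of $B(w,vw)$ and $B(w,wu_1)$ and the clearing of denominators; they are right, and Lemma~\ref{lmm}(2) then gives exactly $\rho_f(G_{vu_1})\le\alpha^{-1/2}=\rho_f(G)$. Both proofs use the hypothesis at the second neighbor $u_2$ in the same essential place (to lower-bound $\rho x_v$ by the two terms coming from $u_1$ and $u_2$). What your approach buys is uniformity with Section~4 of the paper, where the same Lu--Man machinery is used on internal paths, and a cleaner bookkeeping (one inequality instead of three); what the paper's approach buys is elementarity, since it needs nothing beyond Perron--Frobenius.
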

\begin{proof}
Denote $\rho=\rho_f(G)$, $G'=G_{vu_1}$. Let $w$ be the new vertex when subdividing $vu_1$ on $G$. Let $\boldsymbol{x'}$ be a vector on $G'$, where
$$x_i':=\begin{cases}
x_v, & \quad i=w, \\ 
x_i, & \quad i\neq w.
\end{cases}$$
\par Since 
$$f(d_{u_1},d_v)x_{u_1}\geq f(d_{u_1},2)x_{u_1}=y_{u_1}\geq y_v=f(2,d_v)x_w', $$
$$f(d_{u_2},d_v)x_{u_2}\geq f(d_{u_2},2)x_{u_2}=y_{u_2}\geq y_v=f(d_v,2)x_v', $$
$$f(d_{u_1},d_v)x_v\geq f(d_{u_1},2)x_v=f(d_{u_1},2)x_w', $$
we have 
$$
\begin{aligned}
 \rho x_v'=&\sum_{iv\in E(G)}f(d_i,d_v)x_i=\sum_{iv\in E(G')}f(d_i,d_v)x_i'+(f(d_{u_1},d_v)x_{u_1}-f(2,d_v)x_w')\\
 \geq&\sum_{iv\in E(G')}f(d_i,d_v)x_i',    
\end{aligned}
$$
$$\begin{aligned}
\rho x_w'=&\rho x_v\geq f(d_{u_1},d_v)x_{u_1}+f(d_{u_2},d_v)x_{u_2}\geq f(d_{u_1},2)x_{u_1}'+f(d_v,2)x_v', \\
\rho x_{u_1}'=&\sum_{iu_1\in E(G)}f(d_i,d_{u_1})x_i=\sum_{iu_1\in E(G')}f(d_i,d_{u_1})x_i'+(f(d_{u_1},d_v)x_v-f(d_{u_1},2)x_w')\\
\geq&\sum_{iu_1\in E(G')}f(d_i,d_{u_1})x_{u_1}', 
\end{aligned}$$
$$\rho x_j'=\sum_{ij\in E(G')}f(d_i,d_j)x_j', \qquad j\neq v,w,u_1. $$
\par So $\rho\boldsymbol{x'}\geq A_f(G')\boldsymbol{x'}$. By Perron-Frobenius Theorem, $\rho_f(G')\leq\rho$. 
\end{proof}

\begin{thm}\label{thm1}
Assume that $f(x,y)>0$ is a symmetric real function, increasing in variable $x$. Given a connected graph $G$, $C$ is a cycle of $G$. Then there exists an edge $e$ on $C$, such that $\rho_f(G_e)\leq\rho_f(G)$. 
\end{thm}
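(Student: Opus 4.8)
The plan is to reduce the statement directly to Lemma \ref{lem1} by choosing the edge $e$ cleverly: I will locate a vertex on the cycle $C$ at which the auxiliary quantity $y_i = f(d_i,2)x_i$ attains its minimum over $V(C)$, and then the two cycle-neighbors of that vertex will supply exactly the hypotheses needed by Lemma \ref{lem1}.

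More precisely, let $\boldsymbol{x}$ be the principal eigenvector of $A_f(G)$ and define $y_i = f(d_i,2)x_i$ for each vertex $i$, as in Lemma \ref{lem1}. Since $V(C)$ is finite and nonempty, I would choose $v \in V(C)$ so that $y_v = \min_{i \in V(C)} y_i$. Because $C$ is a cycle in a simple graph, it has length at least $3$, so $v$ has two distinct neighbors $u_1, u_2$ on $C$, and these are genuine neighbors of $v$ in $G$.

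Next I would verify the three hypotheses of Lemma \ref{lem1}. First, $v$ lies on a cycle, so $d_v \geq 2$. Second, each $u_i$ also lies on $C$, hence $d_{u_i} \geq 2$ for $i = 1,2$. Third, by the minimality of $y_v$ over $V(C)$ and since $u_1, u_2 \in V(C)$, we have $y_v \leq y_{u_1}$ and $y_v \leq y_{u_2}$. All hypotheses are therefore met, so Lemma \ref{lem1} yields $\rho_f(G_{vu_1}) \leq \rho_f(G)$. Taking $e = vu_1$, which is an edge of $C$, completes the argument.

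I do not expect a genuine obstacle here: essentially all the content is packed into Lemma \ref{lem1}, and the present theorem is just the observation that a finite cycle always possesses a vertex minimizing $y$ relative to its two cycle-neighbors. The only point requiring slight care is to confirm that the minimizer's two cycle-neighbors are exactly a pair $(u_1,u_2)$ to which the lemma applies, namely that they are distinct and have degree at least $2$; this follows because $C$ has length at least $3$ and every vertex of $C$ has degree at least $2$ in $G$.
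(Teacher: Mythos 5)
Your proposal is correct and is essentially identical to the paper's own proof: the paper likewise picks the vertex of $C$ minimizing $y_i = f(d_i,2)x_i$ over the cycle, notes that its two cycle-neighbors satisfy the hypotheses of Lemma \ref{lem1}, and subdivides the edge to one of them. Your extra remarks about the neighbors being distinct and having degree at least $2$ are a harmless elaboration of what the paper leaves implicit.
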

\begin{proof}
Let $C=v_1v_2\dots v_lv_1$. Without loss of generality, assume that $y_1=\min\{y_1,y_2,\dots,y_l\}$, so $y_1\leq y_2$ and $y_1\leq y_l$. By Lemma \ref{lem1}, $\rho_f(G_{v_1v_2})\leq\rho_f(G)$. 
\end{proof}

\begin{thm}\label{thm2}
Assume that $f(x,y)>0$ is a symmetric real function, increasing in variable $x$. Given a connected graph $G$, $v_1$ is a 2-degree vertex of $G$. Denote its neighbors by $v_2$ and $v_3$. If $v_2v_3\in E(G)$, then $\rho_f(G_{v_1v_2})\leq\rho_f(G)$. 
\end{thm}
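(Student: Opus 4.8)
The plan is to deduce the statement from Lemma \ref{lem1}. Since $v_2v_3\in E(G)$, the vertices $v_1,v_2,v_3$ form a triangle, so $d_{v_2}\ge 2$ and $d_{v_3}\ge 2$; thus the degree hypotheses of Lemma \ref{lem1} are met with $v=v_1$, $u_1=v_2$, $u_2=v_3$. It therefore suffices to verify the two remaining hypotheses $y_{v_1}\le y_{v_2}$ and $y_{v_1}\le y_{v_3}$, where $\boldsymbol x$ is the principal eigenvector of $A_f(G)$ (positive, since $A_f(G)$ is nonnegative and irreducible) and $y_i=f(d_i,2)x_i$.

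First I would reduce these $y$-inequalities to comparisons of eigenvector entries. Because $d_{v_1}=2$ we have $y_{v_1}=f(2,2)x_{v_1}$, while $y_{v_2}=f(d_{v_2},2)x_{v_2}\ge f(2,2)x_{v_2}$ since $f$ is increasing in its first variable and $d_{v_2}\ge 2$ (and similarly for $v_3$). Hence it is enough to establish the eigenvector comparisons $x_{v_2}\ge x_{v_1}$ and $x_{v_3}\ge x_{v_1}$.

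The heart of the argument is proving $x_{v_2}\ge x_{v_1}$; the inequality $x_{v_3}\ge x_{v_1}$ then follows by interchanging the roles of $v_2$ and $v_3$. Writing $\rho=\rho_f(G)$, I would subtract the eigenvalue equation at $v_1$ from that at $v_2$ and use the symmetry $f(2,d_{v_2})=f(d_{v_2},2)$ to combine the terms carrying $x_{v_1}$ and $x_{v_2}$, obtaining
$$[\rho+f(d_{v_2},2)](x_{v_2}-x_{v_1})=[f(d_{v_2},d_{v_3})-f(2,d_{v_3})]\,x_{v_3}+\!\!\sum_{i\in N(v_2)\setminus\{v_1,v_3\}}\!\!f(d_i,d_{v_2})\,x_i.$$
The decisive point, and the only place the hypothesis $v_2v_3\in E(G)$ is used, is that $v_1$ and $v_2$ share the neighbor $v_3$: after cancellation the coefficient of $x_{v_3}$ collapses to the monotonicity difference $f(d_{v_2},d_{v_3})-f(2,d_{v_3})\ge 0$ (as $d_{v_2}\ge 2$). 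Since the residual sum over the private neighbors of $v_2$ is nonnegative and $\boldsymbol x>0$, the right-hand side is nonnegative; as $\rho+f(d_{v_2},2)>0$, this forces $x_{v_2}\ge x_{v_1}$.

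With $x_{v_2}\ge x_{v_1}$ and $x_{v_3}\ge x_{v_1}$ in hand, the $y$-inequalities follow from the second paragraph, and Lemma \ref{lem1} yields $\rho_f(G_{v_1v_2})\le\rho_f(G)$. I expect the only delicate part to be the bookkeeping in subtracting the two eigenvalue equations: one must track the shared neighbor $v_3$ so that its net contribution reduces to a single monotonicity difference with the correct sign, while the extra neighbors of $v_2$ contribute manifestly nonnegative terms. Everything beyond this reduction is routine.
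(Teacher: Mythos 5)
Your proof is correct and follows essentially the same route as the paper: both reduce the theorem to the eigenvector comparisons $x_{v_1}\le x_{v_2}$ and $x_{v_1}\le x_{v_3}$ via Lemma \ref{lem1}, and both establish these by combining the eigenvalue equations at $v_1$ and $v_2$ with the monotonicity of $f$ and the shared neighbor $v_3$. The only difference is presentational — you subtract the two equations and read off the sign directly, whereas the paper assumes $x_{v_1}>x_{v_2}$ and derives a contradiction from the same two equations.
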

\begin{proof}
By Lemma \ref{lem1}, it suffices to show that $x_1\leq x_2$, then $y_1\leq y_2$ (and similarly, $y_1\leq y_3$). Assume that $x_1>x_2$. Denote $\rho=\rho(G)$, then 
$$\rho x_2<\rho x_1=f(d_2,2)x_2+f(d_3,2)x_3, $$
while 
$$\rho x_2\geq f(2,d_2)x_1+f(d_3,d_2)x_3>f(2,d_2)x_2+f(d_3,2)x_3>\rho x_2, $$
a contradiction. 
\end{proof}

\section{Applying generalized Lu-Man method on an internal path}
An internal path is a sequence of vertices $v_0,\dots,v_l$ such that $d_0\geq3$, $d_l\geq3$, $d_1=\dots=d_{l-1}=2$ and $v_i\sim v_{i+1}$ for $i=0,1,\dots,l-1$. Note that an internal path could be a cycle when $v_0=v_l$. 

\par Let $G$ be a connected graph and $P=v_0e_1v_1e_2\dots v_{l-1}e_lv_l$ be an internal path in $G$, $l\geq2$. If there exists a weighted incidence matrix $B$, such that
$$\sum_{e\in E(v)}B(v,e)=1 \text{ for any }v\in \{v_1,\dots,v_{l-1}\}, $$
$$\prod_{v\in e}B(v,e)=\alpha w(e)^2 \text{ for any }e\in \{e_1,\dots,e_l\}, $$
then $P$ is called $(B(v_0,e_1),B(v_l,e_l))$ $\alpha$-normal (with $B$). Denote $\alpha':=(f(2,2))^2 \alpha $, $\beta(d):=(f(d,2)/f(2,2))^2$, for $d_0=d_G(v_0)$, we have
$$B(v_{i-1},e_i)(1-B(v_i,e_{i+1}))=\begin{cases}
\alpha', & \quad i=2,3,\dots,l-1, \\ 
\alpha'\beta(d_0), & \quad i=1. 
\end{cases}$$
Let
$$x_i:=\begin{cases}
B(v_i,e_{i+1}), & \quad i=1,2,\dots,l-1, \\ 
\beta(d_0)^{-1}B(v_i,e_{i+1}), & \quad i=0, 
\end{cases}$$
then $(x_i)_{i=0}^{l-1}$ is a subsequence of the following bi-infinite sequence $(x_i)_{i\in\mathbb{Z}}$ such that
$$x_n=f(x_{n-1})=\dots=f^n(x_0)\quad\text{and}\quad x_{-n}=f^{-1}(x_{1-n})=\dots=f^{-n}(x_0)\quad\text{for } n\in\mathbb{N}, $$
where 
$$f(x)=1-\frac{\alpha'}x. $$
Shan et al. \cite{shan2021smallest} discussed sequences of this type. 

\begin{lem}[\cite{shan2021smallest}]\label{lem2}
Let $(x_i)_{i\in\mathbb{Z}}$ be a bi-infinite sequence such that $x_n=f(x_{n-1})$, where $f(x)=1-\frac{\alpha'}x$. Suppose $x_p+x_q=1$ ($p,q\in\mathbb{Z}$). If $\alpha'\in(0,\frac14]$, then $x_n$ can be expressed as:
$$x_n=F_\theta(p+q-2n), $$
where
$$F_\theta(x):=\frac12\left(1-\tanh(\theta)\tanh\left(\frac{x\theta}2\right)\right), $$
$$\theta:=\mathrm{arccosh}\left(\frac12\alpha'^{-\frac12}\right). $$
And $F_\theta(x)$ is strictly decreasing and convex in $[0,+\infty)$. 
\end{lem}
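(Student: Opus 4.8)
The plan is to reparametrize $\alpha'$ hyperbolically, exhibit the claimed formula as an explicit solution of the recurrence, and then use monotonicity to single it out. Writing $\alpha'=\tfrac14\operatorname{sech}^2\theta$ (so that $\cosh\theta=\tfrac12(\alpha')^{-1/2}$, matching the definition of $\theta$, and $\alpha'\in(0,\tfrac14]$ corresponds to $\theta\ge0$), the two fixed points of $f(x)=1-\alpha'/x$ solve $x^2-x+\alpha'=0$ and are $\xi_\pm=\tfrac12(1\pm\tanh\theta)=e^{\pm\theta}/(2\cosh\theta)$, with product $\alpha'$ and sum $1$. This already explains the shape of $F_\theta$ (alternatively one reaches it by the linearizing substitution $x_n=w_n/w_{n-1}$, which turns the recurrence into $w_n=w_{n-1}-\alpha' w_{n-2}$ with characteristic roots $\xi_\pm$): a short computation gives $F_\theta(x)=\cosh(\tfrac{(2-x)\theta}{2})/\bigl(2\cosh\theta\,\cosh(\tfrac{x\theta}{2})\bigr)$, whose limits as $x\to\pm\infty$ are exactly $\xi_\mp$.

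First I would verify that $\tilde x_n:=F_\theta(p+q-2n)$ solves the recurrence. Setting $m=p+q-2n$ and $t=m\theta/2$, and abbreviating $T=\tanh\theta$, the relation $\tilde x_n=1-\alpha'/\tilde x_{n-1}$ is equivalent to the single identity $(1-F_\theta(m))\,F_\theta(m+2)=\alpha'$, i.e.
\[
\bigl(1+T\tanh t\bigr)\bigl(1-T\tanh(t+\theta)\bigr)=1-T^2=4\alpha'.
\]
Expanding the left-hand side with $\tanh(t+\theta)=\frac{\tanh t+T}{1+T\tanh t}$ collapses it to $1-T^2$, so the recurrence holds for every $n$. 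The symmetry condition is then automatic: since $\tanh$ is odd, $F_\theta(x)+F_\theta(-x)=1$, and evaluating at $x=q-p$ gives $\tilde x_p+\tilde x_q=F_\theta(q-p)+F_\theta(p-q)=1$. Thus $\tilde x$ is a bi-infinite solution meeting all hypotheses, taking values strictly between $\xi_-$ and $\xi_+$.

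It remains to match the given sequence $x_n$ to $\tilde x_n$. Because $f$ is an invertible (Möbius) map, a sequence obeying $x_n=f(x_{n-1})$ is determined by any single entry, so it suffices to show $x_p=\tilde x_p$. Taking $q\ge p$ without loss of generality ($q=p$ forces $x_p=\tfrac12=\tilde x_p$), both $x_p$ and $\tilde x_p$ are zeros of $\Phi(s):=s+f^{q-p}(s)-1$. On $(\xi_-,\xi_+)$ one has $f'(x)=\alpha'/x^2>0$ and $f$ maps this interval into itself, so $f^{q-p}$ is strictly increasing there and $\Phi$ is strictly increasing, hence injective; since $\tilde x_p\in(\xi_-,\xi_+)$ and the same holds for $x_p$, we conclude $x_p=\tilde x_p$ and therefore $x_n\equiv\tilde x_n$. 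Finally, the shape of $F_\theta$ follows by differentiation: $F_\theta'(x)=-\tfrac14 T\theta\operatorname{sech}^2(\tfrac{x\theta}{2})<0$ and $F_\theta''(x)=\tfrac14 T\theta^2\operatorname{sech}^2(\tfrac{x\theta}{2})\tanh(\tfrac{x\theta}{2})>0$ for $x>0$ (with $T,\theta>0$, the case $\theta=0$ giving the constant solution $\equiv\tfrac12$), which yields strict decrease and convexity on $[0,\infty)$.

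The main obstacle I anticipate is not the algebraic verification — which reduces to one hyperbolic identity — but the uniqueness step. The delicate point is that orbits lying on the complementary arc through $\infty$ can also satisfy $x_p+x_q=1$, so confining $x_p$ to $(\xi_-,\xi_+)$ is essential for $\Phi$ to be injective. I would justify this confinement from the setting in which the lemma is applied, where the incidence weights force $x_n\in(0,1)$: any orbit meeting $(0,\xi_-)$ or $(\xi_+,1)$ leaves $(0,1)$ under forward, respectively backward, iteration, so a bi-infinite $(0,1)$-valued orbit must stay in $[\xi_-,\xi_+]$. Making this range restriction explicit (or simply adding the hypothesis $x_n\in(0,1)$) is the one place where care is genuinely required; everything else is routine.
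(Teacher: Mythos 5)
The paper gives no proof of this lemma --- it is imported verbatim from \cite{shan2021smallest} --- so there is no in-paper argument to set yours against; I can only assess the proposal on its own terms, and on those terms it is correct. The verification half is clean: the reparametrization $\alpha'=\tfrac14\mathrm{sech}^2\theta$ matches $\theta=\mathrm{arccosh}(\tfrac12\alpha'^{-1/2})$, the fixed points $\xi_\pm=\tfrac12(1\pm\tanh\theta)$ are right, the recurrence reduces to the identity $(1+T\tanh t)(1-T\tanh(t+\theta))=1-T^2$ which the addition formula collapses instantly, the symmetry $F_\theta(x)+F_\theta(-x)=1$ handles the constraint $x_p+x_q=1$, and the sign computations for $F_\theta'$ and $F_\theta''$ give the monotonicity and convexity claims (modulo the degenerate case $\theta=0$, where $F_\theta\equiv\tfrac12$ is constant rather than strictly decreasing --- a defect of the statement, not of your argument, and you flag it).

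The most valuable part of your write-up is the uniqueness discussion, and your worry there is not hypothetical: as literally stated the lemma is false. For instance with $q=p+1$ the condition $x_p+x_{p+1}=1$ reduces to $x_p^2=\alpha'$, and the root $x_p=-\sqrt{\alpha'}$ generates a bona fide bi-infinite orbit satisfying all stated hypotheses but not the formula (which predicts $x_p=F_\theta(1)=+\sqrt{\alpha'}$). So some confinement hypothesis --- $x_n\in(0,1)$ for all $n$, or equivalently that the orbit lies in $[\xi_-,\xi_+]$ --- is genuinely required and is only implicit in the paper, where the $x_i$ come from a weighted incidence matrix with positive entries and unit row sums. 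Your dynamical argument (orbits entering $(0,\xi_-)$ or $(\xi_+,1)$ escape $(0,1)$ under forward, resp.\ backward, iteration; $\Phi(s)=s+f^{q-p}(s)-1$ is strictly increasing on $(\xi_-,\xi_+)$ because $f$ preserves that interval and is increasing there) correctly closes the gap once that hypothesis is added. In short: the proof is sound, and you have in addition identified a missing hypothesis in the quoted statement that the paper silently relies on the application context to supply.
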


\begin{remark}
For any graph $G$ that is not a tree, there exists a cycle $C_g$ as a subgraph, so $\rho_f(G)\geq\rho_f(C_g)=2f(2,2)$ and $\alpha'(G)=(\rho(f(G)))^{-2}(f(2,2))^2\in(0,\frac14]$ hold. 
\end{remark}

Let $u,v$ be two vertices of graph $G$.
If there exists an automorphism  of $G$ which maps a vertex $u$ to another vertex $v$, then $u$ and $v$ are referred to as similar vertices.

The following lemma shows the application of Lemma \ref{lem2}, which plays an important role in Section 5. 
\begin{lem}\label{lem4}
Let $G$ be a connected graph and $P=v_0e_1v_1e_2\dots v_{l-1}e_lv_l$ be an internal path in $G$, $l\geq2$. Let $F_\theta(x)$ be defined as above. Setting $c_1=\beta(d_G(v_0))F_\theta(l)),c_2=\beta(d_G(v_0))F_\theta(l_1)$ and $c_3=\beta(d_G(v_l))F_\theta(l_2)$, then
\begin{itemize}
\item[(1)] For $\alpha=\alpha(G)\in(0,\frac14(f(2,2))^{-2}]$, if $v_0$ and $v_l$ are similar vertices, then $P$ is $(c_1,c_1)$ $\alpha$-normal with the principal weighted incidence matrix $B_G$. 
\item[(2)] For any $\alpha\in(0,\frac14f(f(2,2))^{-2}]$ and integer $l_1,l_2$ with $l_1+l_2=2l$, $P$ is $(c_2,c_3)$ $\alpha$-normal. 
\end{itemize}
\end{lem}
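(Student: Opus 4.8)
The two statements are proved in opposite directions: for (2) I will \emph{construct} a weighted incidence matrix on $P$, while for (1) I will read the required data off the principal weighted incidence matrix $B_G$ and exploit a symmetry. Throughout I use the translation set up just before the statement: an incidence matrix obeying the internal normalization corresponds to a sequence with $x_i=B(v_i,e_{i+1})$ for $1\le i\le l-1$, $x_0=\beta(d_0)^{-1}B(v_0,e_1)$, satisfying $x_n=f(x_{n-1})=1-\alpha'/x_{n-1}$, and with boundary values $B(v_0,e_1)=\beta(d_0)x_0$ and $B(v_l,e_l)=\beta(d_l)(1-x_l)$, where I set $x_l:=f(x_{l-1})$.

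For (2), fix $l_1,l_2$ with $l_1+l_2=2l$ and define the bi-infinite sequence $x_n:=F_\theta(l_1-2n)$. Since $\alpha'\in(0,\tfrac14]$ the angle $\theta$ is real and $F_\theta$ takes values in $(0,1)$, so each $x_n$ is an admissible incidence weight. The first step is to check that this sequence obeys the recursion $x_{n-1}(1-x_n)=\alpha'$, i.e. the identity $F_\theta(t+2)\,(1-F_\theta(t))=\alpha'$; writing $1-F_\theta(t)=F_\theta(-t)$ and applying $\frac{\tanh X-\tanh Y}{1-\tanh X\tanh Y}=\tanh(X-Y)$ with $X=(t+2)\theta/2$, $Y=t\theta/2$ reduces it to $\cosh\theta=\tfrac12(\alpha')^{-1/2}$, which is the definition of $\theta$; this is the converse direction of Lemma \ref{lem2}. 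With the recursion in hand I put $B(v_i,e_{i+1})=x_i$ and $B(v_i,e_i)=1-x_i$ for $1\le i\le l-1$ (so $\sum_{e\in E(v_i)}B(v_i,e)=1$ holds at every internal vertex), $B(v_0,e_1)=\beta(d_0)x_0$, and $B(v_l,e_l)=\beta(d_l)(1-x_l)$. The product condition on an internal edge reads $x_{i-1}(1-x_i)=\alpha'$ and holds by the recursion; on $e_1$ it becomes $\beta(d_0)x_0(1-x_1)=\alpha'\beta(d_0)$ and on $e_l$ it becomes $x_{l-1}\beta(d_l)(1-x_l)=\alpha'\beta(d_l)$, both again by the recursion. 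Finally $x_0=F_\theta(l_1)$ gives $B(v_0,e_1)=\beta(d_0)F_\theta(l_1)=c_2$, and $1-x_l=1-F_\theta(l_1-2l)=1-F_\theta(-l_2)=F_\theta(l_2)$ gives $B(v_l,e_l)=\beta(d_l)F_\theta(l_2)=c_3$, so $P$ is $(c_2,c_3)$ $\alpha$-normal.

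For (1) I take $B=B_G$. Because every internal vertex has degree $2$, all its incident edges lie on $P$, so the global normalization of $B_G$ restricts to the internal-path normalization, and the product conditions hold since $B_G$ is $\alpha(G)$-normal with $\alpha(G)=\rho_f(G)^{-2}$; by the Remark $\alpha'(G)\le\tfrac14$, so Lemma \ref{lem2} applies. The associated sequence satisfies the recursion, and it remains to fix the phase, i.e. to prove $x_0+x_l=1$. Expressing the entries of $B_G$ through the Perron vector $\boldsymbol x$ gives $x_0=\frac{(f(2,2))^2x_{v_1}}{f(d_0,2)\rho x_{v_0}}$ and $1-x_l=\frac{(f(2,2))^2x_{v_{l-1}}}{f(d_l,2)\rho x_{v_l}}$. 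Similarity of $v_0,v_l$ yields an automorphism of $G$ fixing the Perron vector, so $d_0=d_l$ and $x_{v_0}=x_{v_l}$; combined with $x_{v_1}=x_{v_{l-1}}$ (established below) this makes the two displayed quantities equal, so $x_0+x_l=1$. Lemma \ref{lem2} with $p=0,q=l$ then gives $x_n=F_\theta(l-2n)$, whence $x_0=F_\theta(l)$ and $1-x_l=F_\theta(l)$, so $B_G(v_0,e_1)=\beta(d_0)F_\theta(l)=c_1$ and $B_G(v_l,e_l)=\beta(d_l)F_\theta(l)=c_1$, and $P$ is $(c_1,c_1)$ $\alpha(G)$-normal.

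The delicate point, and what I expect to be the main obstacle, is the symmetry $x_{v_1}=x_{v_{l-1}}$ in (1): an automorphism sending $v_0$ to $v_l$ need not carry $P$ to its reverse, so this cannot be read off directly. I would instead argue by uniqueness. Restricting $A_f(G)\boldsymbol x=\rho\boldsymbol x$ to the internal vertices gives a tridiagonal system for $(x_{v_1},\dots,x_{v_{l-1}})$ whose right-hand side is determined by the boundary data $x_{v_0},x_{v_l}$; its matrix is $\rho I-f(2,2)A(P_{l-1})$, invertible because the spectrum $\{2f(2,2)\cos(k\pi/l)\}_{k=1}^{l-1}$ lies strictly below $2f(2,2)\le\rho$. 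The coefficient pattern is palindromic under $i\mapsto l-i$ (the two end edges both have weight $f(d_0,2)=f(d_l,2)$), so when $x_{v_0}=x_{v_l}$ the reversed vector solves the same system, and uniqueness forces $x_{v_i}=x_{v_{l-i}}$, in particular $x_{v_1}=x_{v_{l-1}}$. The only other computational checkpoint is the hyperbolic identity for the recursion in (2), which is routine.
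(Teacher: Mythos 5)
Your proposal is correct, and part (2) follows essentially the same route as the paper: both take $x_n=F_\theta(l_1-2n)$, read off $x_0=F_\theta(l_1)$ and $1-x_l=F_\theta(l_2)$, and translate back to the incidence matrix (the paper packages the second endpoint via an auxiliary reflected sequence $y_{l-i}=1-x_i$ and re-invokes Lemma \ref{lem2}, whereas you verify the recursion and use $F_\theta(t)+F_\theta(-t)=1$ directly; this is cosmetic). Part (1) is where you genuinely diverge. The paper disposes of the key step with the single assertion that $B_G(v_1,e_2)=B_G(v_{l-1},e_{l-1})$ ``by symmetry,'' which, as you correctly observe, is not immediate: an automorphism carrying $v_0$ to $v_l$ need not map $P$ to its reverse (e.g.\ in $\Theta(s,s,t)$ it may permute the two paths of length $s$), so the palindromicity of the Perron vector along $P$ requires an argument. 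Your substitute --- restricting the eigen-equation to the interior of $P$, noting the resulting tridiagonal system $\bigl(\rho I - f(2,2)A(P_{l-1})\bigr)\mathbf{z}=\mathbf{b}$ is invertible because $\rho\ge 2f(2,2)>2f(2,2)\cos(k\pi/l)$, and that both matrix and right-hand side are palindromic once similarity gives $d_0=d_l$ and $x_{v_0}=x_{v_l}$ --- is a clean and fully rigorous way to force $x_{v_i}=x_{v_{l-i}}$, and it in fact shows the conclusion needs only $d_0=d_l$ and $x_{v_0}=x_{v_l}$ rather than the full strength of similarity. So your proof buys rigor (and slight generality) at the modest cost of one extra linear-algebra step; the paper's version is shorter but leans on an unproved symmetry claim that your argument is precisely what is needed to justify.
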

\begin{proof}
\begin{itemize}
\item[(1)] Since $v_0$ and $v_l$ are similar vertices, $B_G(v_1,e_2)=B_G(v_{l-1},e_{l-1})$ by symmetry. 
So $x_1+x_{l-1}=B_G(v_1,e_2)+B_G(v_{l-1},e_{l})=B_G(v_{l-1},e_{l-1})+B_G(v_{l-1},e_{l})=1$. By Lemma \ref{lem2}, $B_G(v_l,e_l)=B_G(v_0,e_1)=\beta(d_0)x_0=\beta(d_0)F_\theta(l)$. 
\item[(2)] 
Take $B(v_0,e_1)=\beta(d_0)F_\theta(l_1)$, then $x_0=\beta(d_0)^{-1}B(v_0,e_1)=F_\theta(l_1)$, and $x_n=f^n(x_0)=F_\theta(l_1-2n)$. So $x_0+x_{l_1}=1$. Let $y_{l-i}=1-x_i$, 
then $y_l+y_{l-l_1}=(1-x_0)+(1-x_{l_1})=1$. By Lemma \ref{lem2}, $y_0=F_\theta(l+(l-l_1)-2\cdot0)=F_\theta(l_2)$, so $B(v_l,e_l)=\beta(d_l)y_0=\beta(d_l)F_\theta(l_2)$. 
\end{itemize}
\end{proof}

\begin{remark}
The result does not hold directly for $l=1$, since the weight of $e_1=v_0v_l$ is $f(d_0,d_l)$, while the deduction requires it to be $f(d_0,2)$. However, we can manually modify the weight of $e_1$ from $f(d_0,d_l)$ to $f(d_0,2)$ to keep the deduction. Note that this modification decreases the spectral radius, so we need to be careful when applying Lemma \ref{lem4} for $l=1$. 
\end{remark}

And we need two more propositions on $F_\theta(x)$. 

\begin{lem}\label{lem3}
Let $F_\theta(x)$ be defined as above, $a\geq b\geq0$, then
$$F_\theta(a)+F_\theta(-b)\geq F_\theta(a-b)+F_\theta(0). $$
\end{lem}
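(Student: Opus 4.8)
The plan is to exploit the reflection symmetry of $F_\theta$ to rewrite the term $F_\theta(-b)$ as a quantity with nonnegative argument, and then to reduce the whole inequality to the standard ``increasing increments'' property of convex functions, which is available because Lemma~\ref{lem2} guarantees that $F_\theta$ is convex on $[0,+\infty)$.

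First I would record the reflection identity
\[ F_\theta(x) + F_\theta(-x) = 1 \qquad (x \in \mathbb{R}), \]
which follows immediately from the oddness of $\tanh$ in the defining formula $F_\theta(x) = \tfrac12\bigl(1 - \tanh(\theta)\tanh(x\theta/2)\bigr)$; in particular $F_\theta(-b) = 1 - F_\theta(b)$ and $F_\theta(0) = \tfrac12$. Substituting these into the claimed inequality and cancelling, the statement $F_\theta(a) + F_\theta(-b) \geq F_\theta(a-b) + F_\theta(0)$ becomes equivalent to
\[ F_\theta(a) - F_\theta(a-b) \geq F_\theta(b) - F_\theta(0). \]

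Next I would set $u := a - b$, which satisfies $u \geq 0$ since $a \geq b$. Then the left-hand side equals $F_\theta(u+b) - F_\theta(u)$ and the right-hand side equals $F_\theta(0+b) - F_\theta(0)$, so the inequality reads
\[ F_\theta(u+b) - F_\theta(u) \geq F_\theta(0+b) - F_\theta(0), \qquad u \geq 0,\ b \geq 0. \]
All four arguments $u+b = a$, $u = a-b$, $b$, $0$ now lie in $[0,+\infty)$, so the convexity of $F_\theta$ on $[0,+\infty)$ supplied by Lemma~\ref{lem2} applies. I would then invoke the increasing-increments characterization of convexity: for a convex function the map $t \mapsto F_\theta(t+b) - F_\theta(t)$ is non-decreasing in $t$ (equivalently, its derivative $F_\theta'(t+b) - F_\theta'(t)$ is $\geq 0$ because $F_\theta'$ is non-decreasing). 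Applying this with $t = u \geq 0$ compared to $t = 0$ yields exactly the displayed inequality, finishing the proof.

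The argument has no genuinely hard step: the only points requiring care are verifying the reflection identity cleanly and checking that, after the substitution, every argument of $F_\theta$ remains in $[0,+\infty)$ where convexity is guaranteed, whereas the negative argument $-b$ is eliminated \emph{before} convexity is ever used. The degenerate case $b = 0$ collapses both sides to an equality and is immediate; for $b > 0$ the monotone-increment step uses only convexity (not strict convexity) and delivers the non-strict inequality as stated.
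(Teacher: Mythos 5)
Your proof is correct and takes essentially the same route as the paper: both arguments combine the reflection identity $F_\theta(x)+F_\theta(-x)=1$ (with $F_\theta(0)=\tfrac12$) with the four-point convexity inequality $F_\theta(a-b)+F_\theta(b)\le F_\theta(0)+F_\theta(a)$ on $[0,+\infty)$, your ``increasing increments'' step being exactly this inequality rearranged. The only difference is the order in which the two ingredients are applied, so no further comment is needed.
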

\begin{proof}
Since $F_\theta(x)$ is convex in $[0,+\infty)$ and $0\leq a-b\leq a$, $0\leq b\leq a$, we have
$$F_\theta(a-b)+F_\theta(b)\leq F_\theta(0)+F_\theta(a). $$
Note that
$$F_\theta(0)=\frac12,\quad F_\theta(b)+F_\theta(-b)=1, $$
so
$$F_\theta(a-b)+F_\theta(0)-F_\theta(-b)=F_\theta(a-b)+F_\theta(b)-F_\theta(0)\leq F_\theta(a). $$
\end{proof}

\begin{lem}\label{lem44}
Let $F_\theta(x)$ be defined as above, $x\geq3$, $\theta>0$, then
$$4F_\theta(2x)>3F_\theta(x). $$
\end{lem}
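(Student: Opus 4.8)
The plan is to substitute the explicit form $F_\theta(x)=\tfrac12\bigl(1-\tanh\theta\,\tanh(x\theta/2)\bigr)$ and turn the transcendental inequality into a polynomial one. Writing $t=\tanh\theta\in(0,1)$ and $p=\tanh(x\theta/2)\in(0,1)$, and using the double-angle identity $\tanh(x\theta)=2p/(1+p^2)$, the claim $4F_\theta(2x)>3F_\theta(x)$ becomes, after clearing the positive factor $2(1+p^2)$, the inequality $g(p,t):=1+p^2-5tp+3tp^3>0$. First I would record that $g$ is affine in $t$ with slope $-p(5-3p^2)<0$ for $p\in(0,1)$, so $g$ is strictly decreasing in $t$.

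Next I would exploit the constraint $x\ge3$ to isolate the worst case. Given any admissible pair $(x,\theta)$, set $\theta_0=x\theta/3\ge\theta$; then $\tanh(3\theta_0/2)=\tanh(x\theta/2)=p$, so replacing $(x,\theta)$ by $(3,\theta_0)$ keeps $p$ fixed while raising $t$ to $t_0=\tanh\theta_0\ge t$. Since $g$ decreases in $t$, this gives $g(x,\theta)\ge g(3,\theta_0)$, and therefore it suffices to prove $g>0$ on the boundary $x=3$. This reduction is what makes the problem one-dimensional and, crucially, permits a multiple-angle substitution in the final step (which would not be available for a continuous parameter $x$).

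At $x=3$ I would set $u=\tanh(\theta/2)\in(0,1)$, so that $t=2u/(1+u^2)$ and $p=u(3+u^2)/(1+3u^2)$ are rational in $u$. Substituting into $g$ and clearing the positive denominator $(1+u^2)(1+3u^2)^3$ leaves an even polynomial in $u$; in the variable $v=u^2$ it is $N(v)=9v^5+13v^4-62v^3+50v^2-11v+1$. The key algebraic fact is the factorization $N(v)=(v-1)^2\bigl(9v^3+31v^2-9v+1\bigr)$, whose double root at $v=1$ reflects that the inequality is asymptotically tight as $\theta\to\infty$.

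To finish, I would observe that for $v\ge0$ the quadratic $31v^2-9v+1$ is strictly positive (its discriminant is $81-124<0$) while $9v^3\ge0$, so the cubic factor $9v^3+31v^2-9v+1$ is strictly positive; together with $(v-1)^2>0$ for $v\in(0,1)$ this yields $N(v)>0$, hence $g>0$ and the lemma. I expect the main obstacle to be the middle step: carefully justifying the reduction to $x=3$ (the monotonicity of $g$ in $t$ combined with the reparametrization that fixes $p$), and then executing the expansion at $x=3$ and discovering the factorization. Because the inequality degenerates to equality in the limit $\theta\to\infty$, no crude estimate can replace this exact factorization.
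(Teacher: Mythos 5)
Your proof is correct, and it takes a genuinely different route from the paper's. The paper works with $f_1(x)=8F_\theta(4x)-6F_\theta(2x)$ as a function of $x$ for fixed $\theta$, computes $f_1'$, locates the unique interior minimum at $x_0=\mathrm{arctanh}(1/\sqrt3)/\theta$, and then splits into two cases according to whether that minimum lies in the domain, evaluating $f_1$ exactly at the minimum in one case and at the boundary $x=3/2$ (via $t=e^\theta$) in the other. You instead eliminate calculus entirely: the substitution $t=\tanh\theta$, $p=\tanh(x\theta/2)$ with the double-angle identity turns the claim into $g(p,t)=1+p^2-5tp+3tp^3>0$ (I checked this reduction and it is right), the affineness of $g$ in $t$ with negative slope combined with the reparametrization $\theta_0=x\theta/3$ (which fixes $p$ and raises $t$) reduces everything to the boundary $x=3$ with no case split, and the triple-angle formula then yields the polynomial $N(v)=9v^5+13v^4-62v^3+50v^2-11v+1=(v-1)^2(9v^3+31v^2-9v+1)$, which I verified is positive for $v\in(0,1)$ exactly as you argue. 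Both proofs ultimately hinge on an exact algebraic identity because the inequality is tight as $\theta\to\infty$; yours buys a cleaner, derivative-free and case-free argument at the price of the somewhat slick fixed-$p$ reparametrization, while the paper's is a more routine (if longer) monotonicity analysis. One small presentational point: make explicit that $p,t\in(0,1)$ for $\theta>0$, $x\geq3$, so that the slope $-p(5-3p^2)$ is indeed strictly negative and $v=\tanh^2(\theta/2)\in(0,1)$ guarantees $(v-1)^2>0$.
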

\begin{proof}
Let $f_1(x):=8F_\theta(4x)-6F_\theta(2x)=1+3\tanh(\theta)\tanh(\theta x)-4\tanh(\theta)\tanh(2\theta x)$, $x\geq\frac32$, 
$$f_1'(x)=\theta \tanh(\theta)(8\tanh^2(2\theta x)-3\tanh^2(\theta x)-5). $$
The solution of $f_1'(x_0)=0$ on $x_0>0$ is $x_0=\tau/\theta$, where $\tau=\mathrm{arctanh}(1/\sqrt{3})\approx0.6585$. So $f_1(x)$ decreases in $(0,x_0]$, increases in $[x_0,+\infty)$. 
\par If $\theta\leq\frac23\tau<\tau$, then $$
\begin{aligned}
f_1(x)\geq &f_1(\tau/\theta)=1-(4\tanh(2\tau)-3\tanh(\tau))\tanh(\theta)\\
=&1-(3-\sqrt{3})\tanh(\theta)>1-(3-\sqrt{3})\tanh(\tau)=2-\sqrt{3}>0.  
\end{aligned}
$$ 
\par If $\theta>\frac23\tau$, let $t=e^\theta$, then
\begin{align*}
f_1(x)\geq f_1\left(\frac32\right)&=1+3\tanh(\theta)\tanh\left(\frac32\theta\right)-4\tanh(\theta)\tanh(3\theta)\\
&=\frac{2\left((t-2)^2t^4+(2t-1)^2\right)t^2}{(t^4-t^2+1)(t^2-t+1)(t^2+1)^2}\\
&>0.\qquad (\text{Since } t>1)
\end{align*}
\par So $4F_\theta(2x)>3F_\theta(x)$ when $x\geq3$. 
\end{proof}

\section{The $c$-cyclic graph with the smallest $\rho_f$}
\par Let $\mathcal{G}_{c,n}$ be the set of all connected graphs with cyclomatic number $c$ and order $n$. In particular, $\mathcal{G}_{0,n}$, $\mathcal{G}_{1,n}$ and $\mathcal{G}_{2,n}$ represent the set of all trees, unicyclic graphs and bicyclic graphs of order $n$ and are often written as $\mathcal{T}_n$, $\mathcal{U}_n$ and $\mathcal{B}_n$, respectively. The base of a $c$-cyclic graph $G$, denoted by $\widehat{G}$, is the (unique) minimal $c$-cyclic subgraph of $G$. It is easy to see that $\widehat{G}$ can be obtained from $G$ by consecutively deleting pendent vertices. 
\par In this section, we will discuss the extremal graph of cyclomatic number $c$ and order $n$ with the smallest $\rho_f$. We assume that $f(x,y)>0$ is a symmetric real function, increasing in variable $x$. 


\begin{thm}\label{thm51}
Let $G$ be the graph with the smallest $\rho_f$ in $\mathcal{G}_{c,n}$, $c\geq1$, then $G=\widehat{G}$. 
\end{thm}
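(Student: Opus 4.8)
The plan is to argue by contradiction, exploiting the fact that $G=\widehat{G}$ is equivalent to $G$ having no pendant vertex (minimum degree at least $2$), since $\widehat{G}$ is obtained from $G$ precisely by stripping off pendant vertices. So I would assume that a minimizer $G$ possesses a pendant vertex $p$, with unique neighbor $q$, and manufacture a competitor $G'\in\mathcal{G}_{c,n}$ with $\rho_f(G')<\rho_f(G)$, which contradicts minimality.

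The construction is a two-move surgery that preserves both the order and the cyclomatic number. Since $c\geq1$, $G$ contains a cycle $C$, and $p\notin V(C)$ because a pendant vertex lies on no cycle. First I would apply Theorem \ref{thm1} to obtain an edge $e$ on $C$ with $\rho_f(G_e)\leq\rho_f(G)$. The subdivided graph $G_e$ is connected, has cyclomatic number $c$, order $n+1$, and $p$ is still pendant in it with the same neighbor $q$. Then I would delete $p$ from $G_e$ to form $G':=G_e-p$, which is connected, has order $n$, and has cyclomatic number $c$ (deleting a pendant vertex removes one vertex and one edge, leaving $m-n+1$ unchanged), so $G'\in\mathcal{G}_{c,n}$.

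It remains to show the surgery strictly lowers the $f$-spectral radius. Let $M$ be the principal submatrix of $A_f(G_e)$ indexed by $V(G')=V(G_e)\setminus\{p\}$; since $G'$ is connected, $M$ is nonnegative and irreducible, and $\rho(M)\leq\rho(A_f(G_e))=\rho_f(G_e)$ because the spectral radius of a principal submatrix of a nonnegative matrix cannot exceed that of the whole. Now I compare $A_f(G')$ with $M$ entrywise. The only degree that changes in passing from $G_e$ to $G'$ is that of $q$, which drops from $d_{G_e}(q)$ to $d_{G_e}(q)-1$; as $f$ is increasing in each variable, every weight on an edge incident to $q$ strictly decreases while all other weights are unchanged, so $A_f(G')\leq M$. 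Because $d_G(q)\geq2$ (otherwise $\{p,q\}$ would be a whole connected component, impossible for $c\geq1$), the vertex $q$ still has an incident edge in $G'$, making the inequality strict in at least one entry. Lemma \ref{subg} then yields $\rho_f(G')=\rho(A_f(G'))<\rho(M)\leq\rho_f(G_e)\leq\rho_f(G)$, the desired contradiction; hence $G$ has no pendant vertex and $G=\widehat{G}$.

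The main obstacle I anticipate is the bookkeeping in the last step: one must keep straight that $A_f(G')$ is \emph{not} literally a principal submatrix of $A_f(G_e)$, since deleting $p$ re-weights the edges at $q$, and then chain the strict domination $A_f(G')<M$ from Lemma \ref{subg} together with the principal-submatrix bound $\rho(M)\leq\rho_f(G_e)$ and the subdivision bound of Theorem \ref{thm1}. Verifying that $q$ retains at least one incident edge in $G'$, so that the domination is strict rather than merely $\leq$, is the single place where the hypothesis $c\geq1$ is genuinely needed.
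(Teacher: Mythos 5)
Your proof is correct and is essentially the paper's argument: both combine Theorem \ref{thm1} (subdividing a suitable cycle edge does not increase $\rho_f$) with the strict decrease of $\rho_f$ upon deleting a pendant vertex, the latter being the proper-subgraph monotonicity the paper derives from Lemma \ref{subg} at the start of Section 2. The only difference is that the paper deletes the pendant vertex first and then subdivides (staying at order $n-1$ then returning to $n$), whereas you subdivide first and then delete; the bookkeeping you spell out for the deletion step is exactly the standard justification the paper leaves implicit.
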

\begin{proof}
Otherwise, let $v$ be a pendant vertex and $G':=G-v$, so $\rho_f(G')<\rho_f(G)$. By Theorem \ref{thm1}, there exists an edge $e \in E(\widehat{G}')$, such that $\rho_f(G'_e)\leq\rho_f(G')$. So $\rho_f(G'_e)<\rho_f(G)$ and $G'_e\in\mathcal{G}_{c,n}$, a contradiction. 
\end{proof}
It's enough to show that $C_n$ is the extremal graph with the smallest $\rho_f(G)$ in unicyclic graphs with order $n$. For bicyclic graphs, we need more discussions. 
\par It's known that there are three types of bicyclic graphs without pendant vertices: 
\begin{itemize}
\item[(1)] A $\Theta$-type graph $\Theta(l_1,l_2,l_3)$ is obtained from three pairwise disjoint paths of length $l_1,l_2,l_3$ from vertex $u$ to $v$; 
\item[(2)] An $\infty$-type graph $\infty(l_1,l_2,l_3)$ is obtained from two vertex-disjoint cycles $C_{l_1}$ and $C_{l_2}$ by joining vertex $u$ of $C_{l_1}$ and vertex $v$ of $C_{l_2}$ by a path of length $l_3\geq1$; 
\item[(3)] An $\infty^*$-type graph $\infty^*(l_1,l_2)$ is obtained from two vertex-disjoint cycles $C_{l_1}$ and $C_{l_2}$ by identifying vertex $u$ of $C_{l_1}$ and vertex $v$ of $C_{l_2}$. 
\end{itemize}

\begin{thm}\label{thm52}
Let $s,t$ be positive integers such that $2s+t=m\geq6$, $|s-t|\leq1$. Then $\Theta(s,s,t)$ is the extremal graph with the smallest $\rho_f$ in all $\Theta$-type graphs with size $m$. 
\end{thm}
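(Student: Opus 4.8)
The plan is to pin down $\Theta(s,s,t)$ as the minimizer through the generalized Lu--Man method. Write $G^*:=\Theta(s,s,t)$ and $\alpha^*:=\alpha(G^*)=\rho_f(G^*)^{-2}$; by Lemma~\ref{lmm}(1), $G^*$ is consistently $\alpha^*$-normal, so $\rho_f(G^*)=(\alpha^*)^{-1/2}$. I will then show that every other $\Theta$-graph $G$ of size $m$ carries a \emph{consistent, strictly} $\alpha^*$-supernormal weighted incidence matrix, so that Lemma~\ref{lmm}(5) gives $\rho_f(G)>(\alpha^*)^{-1/2}=\rho_f(G^*)$. First I record the data of $G^*$. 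Since $m\ge6$ forces $s,t\ge2$, all three paths of $G^*$ have length $\ge2$ and its two branch vertices $u,v$ (both of degree $3$) are similar; applying Lemma~\ref{lem4}(1) to the principal incidence matrix $B_{G^*}$ shows that the value of $B_{G^*}$ at $u$ along a path of length $l$ equals $\beta(3)F_{\theta^*}(l)$, where $\theta^*=\theta(\alpha^*)$. The normalization $\sum_{e\in E(u)}B_{G^*}(u,e)=1$ then becomes the key identity $\beta(3)\bigl(2F_{\theta^*}(s)+F_{\theta^*}(t)\bigr)=1$.

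Next, for an arbitrary $G=\Theta(l_1,l_2,l_3)\ne G^*$ with $l_1+l_2+l_3=m$ and (in this first case) all $l_j\ge2$, I build a weighted incidence matrix $B$ with the \emph{fixed} parameter $\theta^*$: on each path I invoke Lemma~\ref{lem4}(2) with the symmetric split $l_{1,j}=l_{2,j}=l_j$, which makes that path $\alpha^*$-normal and assigns endpoint value $\beta(3)F_{\theta^*}(l_j)$ at both $u$ and $v$. Two checks remain. For consistency, the symmetric split makes $B$ invariant under the automorphism $\sigma$ that swaps $u,v$ and reverses every path; since $\sigma$ sends each of the three cycles of $G$ to its own reverse, each consistency product equals its reciprocal and hence is $1$. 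For supernormality, the edge products all equal $\alpha^*w(e)^2$ and the internal-vertex sums equal $1$, so by $\sigma$-symmetry only the common sum at $u,v$ is in question, and the whole condition reduces to
$$\sum_{j=1}^3 F_{\theta^*}(l_j)\ \ge\ 2F_{\theta^*}(s)+F_{\theta^*}(t).$$
Because $F_{\theta^*}$ is convex on $[0,\infty)$ (Lemma~\ref{lem2}) and $(s,s,t)$ is the unique most balanced partition of $m$ into three parts, an exchange/majorization argument — moving one unit from a larger part to a smaller one strictly lowers $\sum F_{\theta^*}$, which is the content of Lemma~\ref{lem3} applied to the relevant increments — yields this inequality, strictly whenever $\{l_1,l_2,l_3\}\ne\{s,s,t\}$. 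Hence $G$ is consistently and strictly $\alpha^*$-supernormal, and $\rho_f(G)>\rho_f(G^*)$.

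There remains the case where one path has length $1$ (at most one, since two would create a double edge). Then the chord $uv$ carries weight $f(3,3)$ rather than the $f(3,2)$ that Lemma~\ref{lem4} presumes for an end-edge, so the construction above does not apply verbatim; this is the crux. Following the Remark after Lemma~\ref{lem4}, I would replace the weight of $uv$ by $f(3,2)$, which strictly decreases $\rho_f$ by Lemma~\ref{subg}, and then run the symmetric construction on the modified graph, treating $uv$ as a genuine length-$1$ internal path. The delicate point is that this modification makes the contribution of $uv$ to each vertex sum equal to $\beta(3)^{1/2}F_{\theta^*}(1)$, which is only $\beta(3)^{-1/2}\le1$ times the value $\beta(3)F_{\theta^*}(1)$ that the symmetric formula assigns to a length-$1$ term. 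The bare majorization inequality is therefore no longer automatically strong enough, and I expect Lemma~\ref{lem3} — which compares $F_{\theta^*}$-increments across shifted arguments — to supply exactly the missing slack, precisely because a length-$1$ path forces the remaining two lengths to be far from balanced once $m\ge6$.

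Collecting the two cases gives $\rho_f(G)>\rho_f(G^*)$ for every $G\not\cong G^*$, so $\Theta(s,s,t)$ is the unique $\Theta$-type graph of size $m$ with smallest $\rho_f$. I expect the length-$1$ analysis to be the main obstacle: the rest is either routine bookkeeping of the incidence matrix (consistency via the reflection symmetry $\sigma$) or a standard convexity estimate, whereas reconciling the true chord weight $f(3,3)$ with the normalized path weight $f(3,2)$ — and verifying that the vertex sum still clears $1$ for \emph{every} admissible increasing $f$ — is where the genuine work, and the real use of Lemma~\ref{lem3}, lies.
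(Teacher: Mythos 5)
Your strategy is the mirror image of the paper's: you fix $\alpha^*=\alpha(\Theta(s,s,t))$ and try to certify every competitor $G=\Theta(l_1,l_2,l_3)$ as consistently and strictly $\alpha^*$-supernormal via Lemma~\ref{lmm}(5), whereas the paper fixes $\alpha=\alpha(G)$ for the competitor and certifies $\Theta(s,s,t)$ as strictly $\alpha$-subnormal via Lemma~\ref{lmm}(3), which requires no consistency check at all. In the case $l_1,l_2,l_3\ge2$ your argument does go through: the symmetric split in Lemma~\ref{lem4}(2), consistency via the reflection symmetry, and the strict convexity of $F_{\theta^*}$ from Lemma~\ref{lem2} (Lemma~\ref{lem3} is not actually needed here; it exists to handle negative arguments in the $\infty$-type case) give exactly $\sum_jF_{\theta^*}(l_j)>2F_{\theta^*}(s)+F_{\theta^*}(t)$. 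Up to the reversal of roles, that part is equivalent in content to the paper's proof.

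The genuine gap is the case $l_1=1$, which you flag but do not close, and which in fact cannot be closed along the line you propose. Your certificate must assign values to the chord $uv$ subject to the edge condition $B(u,uv)\,B(v,uv)\le\alpha^*\,w(uv)^2$, so at least one of the two endpoint values is at most $\sqrt{\alpha^*}\,w(uv)$. Since $F_{\theta^*}(1)=\sqrt{\alpha^*}\,f(2,2)$, this bound equals the ``full share'' $\beta(3)F_{\theta^*}(1)$ multiplied by $w(uv)f(2,2)/f(3,2)^2$, and for a general increasing $f$ this factor (whether $w(uv)=f(3,3)$ or $f(3,2)$ after your modification) can be made arbitrarily small, e.g.\ by taking $f(3,2)/f(2,2)$ huge. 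In that regime $\theta^*$ is large, all values $F_{\theta^*}(l)$ with $l\ge1$ are close to the common positive limit $\frac12(1-\tanh\theta^*)=:c$, the normalization reads $1\approx 3\beta(3)c$, and your vertex sum at $u$ or $v$ is only about $\beta(3)(2+\varepsilon)c<1$: the supernormality certificate genuinely fails, and Lemma~\ref{lem3} supplies only a bounded convexity correction that cannot compensate for losing essentially a full third term. The paper's orientation avoids this structurally: its certificate is built on $\Theta(s,s,t)$, which has no length-$1$ path because $m\ge6$ forces $s,t\ge2$, and the length-$1$ path of $G$ enters only through the normalization $\sum_{e}B_G(u,e)=1$, where decreasing the chord weight lowers $\rho_f(G)$ and therefore only strengthens the target inequality $\rho_f(G)>\rho_f(\Theta(s,s,t))$. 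To repair your write-up you should either reverse the roles as the paper does, or supply a genuinely different argument for $\Theta(1,l_2,l_3)$.
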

\begin{proof}
Let $G=\Theta(l_1,l_2,l_3)\ncong\Theta(s,s,t)$, $l_1\leq l_2\leq l_3$. By Lemma \ref{lem2}, $F_\theta(x)$ is strictly convex in $[0,+\infty)$. Then 
$$F_\theta(l_1)+F_\theta(l_2)+F_\theta(l_3)>F_\theta(s)+F_\theta(s)+F_\theta(t).$$
 Take $\alpha=\alpha(G)$ and $u$ as a vertex of degree 3 of $G$.  By Lemma \ref{lem4}(1), We have $$\sum_{e\in E_G(u)}B_G(u,e)=\beta(3)(F_\theta(l_1)+F_\theta(l_2)+F_\theta(l_3))=1.$$ (If $l_1=1$, we need to modify the weight of edge $uv$ from $f(3,3)$ to $f(3,2)$ to keep the equation; The result still holds since the spectral radius decreases after the modification and we want to prove that $\rho_f(G)>\rho_f(G')$. )
\par For $G'=\Theta(s,s,t)$, denote $u',v'$ as vertices of degree 3 of $G'$. Since $s,t\geq2$, by Lemma \ref{lem4}(2), there exists a weighted incidence matrix $B$, such that $B$ is $(\beta(3)F_\theta(s),\beta(3)F_\theta(s))$, $(\beta(3)F_\theta(s),\beta(3)F_\theta(s))$, $(\beta(3)F_\theta(t),\beta(3)F_\theta(t))$ $\alpha$-normal on the three $u'$-$v'$ paths respectively. And
\begin{align*}
\sum_{e\in E_{G'}(u')}B(u',e)=\sum_{e\in E_{G'}(v')}B(v',e)&=\beta(3)(F_\theta(s)+F_\theta(s)+F_\theta(t))\\
&<\beta(3)(F_\theta(l_1)+F_\theta(l_2)+F_\theta(l_3))=1, 
\end{align*}
so $G'$ is strictly $\alpha$-subnormal. By Lemma \ref{lmm}(3), the result is derived. 
\end{proof}

\begin{thm}\label{thm53}
Let $s,t$ be positive integers such that $2s+t=m\geq8$, $|s-t|\leq1$. Then $\infty(s,s,t)$ is the extremal graph with the smallest $\rho_f$ in all $\infty$-type graphs with size $m$. 
\end{thm}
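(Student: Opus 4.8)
The plan is to adapt the $\alpha$-subnormality comparison behind Theorem~\ref{thm52}, but now accounting for the new feature that an $\infty$-graph is \emph{not} determined by the multiset of its three lengths. Fix an arbitrary $G=\infty(l_1,l_2,l_3)$ of size $m$ with $G\ncong\infty(s,s,t)$, write $u,v$ for its two degree-$3$ vertices (where $C_{l_1},C_{l_2}$ are attached), set $\alpha=\alpha(G)$ with associated parameter $\theta$, so that $G$ is consistently $\alpha$-normal via its principal incidence matrix $B_G$ and $\rho_f(G)=\alpha^{-1/2}$. Each cycle $C_{l_i}$ is an internal path whose two ends coincide, hence are trivially similar, so Lemma~\ref{lem4}(1) gives $B_G(u,e)=\beta(3)F_\theta(l_1)$ on the two edges at $u$ and $B_G(v,e)=\beta(3)F_\theta(l_2)$ on the two edges at $v$. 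Writing the connecting path's two end-values as $\beta(3)F_\theta(a)$ and $\beta(3)F_\theta(b)$ with $a+b=2l_3$ (by the bi-infinite sequence description preceding Lemma~\ref{lem2}), the normalizations $\sum_{e\ni u}B_G(u,e)=\sum_{e\ni v}B_G(v,e)=1$ add to
\[
\beta(3)\bigl(2F_\theta(l_1)+2F_\theta(l_2)+F_\theta(a)+F_\theta(b)\bigr)=2 .
\]

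Next I would use Lemma~\ref{lem4}(2) (valid for the given $\alpha$, unlike Lemma~\ref{lem4}(1)) to build an incidence matrix $B$ on $G'=\infty(s,s,t)$ that is $\alpha$-normal on each of its three internal paths, with all cycle edges equal to $\beta(3)F_\theta(s)$ and all connecting-path edges equal to $\beta(3)F_\theta(t)$; then every degree-$2$ vertex has row-sum $1$ while $u'$ and $v'$ both have row-sum $\beta(3)(2F_\theta(s)+F_\theta(t))$. By the displayed identity, it suffices to establish the strict inequality
\[
4F_\theta(s)+2F_\theta(t)<2F_\theta(l_1)+2F_\theta(l_2)+F_\theta(a)+F_\theta(b),
\]
for then $u',v'$ have row-sum $<1$, $G'$ is strictly $\alpha$-subnormal, and Lemma~\ref{lmm}(3) yields $\rho_f(G')<\alpha^{-1/2}=\rho_f(G)$. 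To get it I would bound the path term by $F_\theta(a)+F_\theta(b)\ge 2F_\theta(l_3)$ — immediate from convexity of $F_\theta$ when $a,b\ge0$, and reducible to it through Lemma~\ref{lem3} together with $F_\theta(-x)=1-F_\theta(x)$ when one of $a,b$ is negative — and then use that, by strict convexity (Lemma~\ref{lem2}), the balanced triple $\{s,s,t\}$ is the unique minimizer of $F_\theta(l_1)+F_\theta(l_2)+F_\theta(l_3)$ among partitions of $m$ into three parts.

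The crux is extracting \emph{strictness}, and this is precisely where the $\infty$-case departs from the $\Theta$-case: the multiset $\{l_1,l_2,l_3\}$ may equal $\{s,s,t\}$ without $G\cong G'$, namely when the two cycles are unequal and the odd length is carried by the connecting path (this requires $t\ge3$, so it is vacuous for the smallest sizes). I would therefore split on whether $\{l_1,l_2,l_3\}=\{s,s,t\}$. If not, the convexity minimization is strict and supplies the margin directly. If so, then $l_1\ne l_2$, hence $F_\theta(a)\ne F_\theta(b)$, so $a\ne b$ with $a+b=2l_3$, and strict convexity makes $F_\theta(a)+F_\theta(b)>2F_\theta(l_3)$ instead — the asymmetry of the connecting path's two ends breaks the tie. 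I expect this tie-breaking step to be the main obstacle, as it is the genuinely new ingredient and needs the exact treatment of the path contribution rather than the crude convex bound.

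Finally I would dispose of the degenerate case $l_3=1$, where the connecting edge carries weight $f(3,3)$ rather than $f(3,2)$ and the sequence machinery does not apply verbatim. Here I would invoke the remark after Lemma~\ref{lem4}: lowering that edge's weight to $f(3,2)$ yields a graph $\hat G$ with $\rho_f(\hat G)\le\rho_f(G)$ (Lemma~\ref{subg}) to which the framework applies, and since $t\ge2$ forces $\{l_1,l_2,1\}\ne\{s,s,t\}$, the strict convexity margin already gives $\rho_f(G')<\rho_f(\hat G)\le\rho_f(G)$. I would also record at the outset that the hypothesis $m\ge8$ is exactly what guarantees $s\ge3$, so that $\infty(s,s,t)$ is a legitimate simple graph whose two equal cycles have admissible length.
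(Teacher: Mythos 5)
Your overall strategy (a Lu--Man comparison via Lemma~\ref{lem4} and convexity of $F_\theta$) is the right family of ideas, but you run the comparison in the opposite direction from the paper, and that direction contains a genuine gap. You take $\alpha=\alpha(G)$ for the \emph{competitor} $G=\infty(l_1,l_2,l_3)$ and try to read off its principal incidence matrix $B_G$. On the two cycles this is fine (each cycle is an internal path whose endpoints coincide, so Lemma~\ref{lem4}(1) applies), but your assertion that the connecting path's two end-values are $\beta(3)F_\theta(a)$ and $\beta(3)F_\theta(b)$ with $a+b=2l_3$ is precisely the point that needs proof and is not supplied by anything you cite. Lemma~\ref{lem2} parametrizes a solution of $x_n=1-\alpha'/x_{n-1}$ by $F_\theta$ only under the hypothesis that $x_p+x_q=1$ for some indices $p,q$; in Lemma~\ref{lem4}(1) this relation is furnished by the similarity of the two endpoints, and when $l_1\neq l_2$ the two degree-$3$ vertices of $G$ are \emph{not} similar, so no such relation is available for the connecting path. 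A priori the restriction of $B_G$ to that path could lie outside the range of $F_\theta$ (the recursion also has $\coth$-type solutions and orbits below the repelling fixed point $\tfrac12(1-\tanh\theta)$, some of which keep every interior entry in $(0,1)$), in which case $a$ and $b$ simply do not exist. This can in fact be ruled out, but only by a separate dynamical argument using the boundary values $x_0=\beta(3)^{-1}-2F_\theta(l_1)$ and $1-x_{l_3}=\beta(3)^{-1}-2F_\theta(l_2)$ forced by the row sums at $u$ and $v$; as written, you assume the conclusion of that argument. Your tie-breaking observation ($a\neq b$ when the length multiset equals $\{s,s,t\}$ but $l_1\neq l_2$) is correct in spirit but rests on the same unproved parametrization.

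The paper sidesteps the issue by normalizing at the \emph{candidate}: it sets $\alpha=\alpha(\infty(s,s,t))$, where all three internal paths have similar endpoints (the connecting path's ends are swapped by the automorphism exchanging the two equal cycles), so Lemma~\ref{lem4}(1) legitimately gives $\sum_{e}B_{G'}(u',e)=\beta(3)(2F_\theta(s)+F_\theta(t))=1$. It then applies the purely constructive Lemma~\ref{lem4}(2) to $G$, choosing the connecting path's end-values to be $\beta(3)F_\theta(m-2l_1)$ and $\beta(3)F_\theta(m-2l_2)$ (note $(m-2l_1)+(m-2l_2)=2l_3$), so that the row sum at each degree-$3$ vertex is exactly $\beta(3)\bigl(2F_\theta(l_i)+F_\theta(m-2l_i)\bigr)$, and a single per-vertex inequality $2F_\theta(l_i)+F_\theta(m-2l_i)\geq 2F_\theta(s)+F_\theta(t)$ (convexity when $m-2l_i\geq0$, Lemma~\ref{lem3} otherwise, with equality only for $l_i=s$) yields strict consistent $\alpha$-supernormality of $G$ and hence $\rho_f(G)>\rho_f(\infty(s,s,t))$ by Lemma~\ref{lmm}(5). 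To salvage your version you would need to prove the tanh-regime claim for the asymmetric connecting path; otherwise, switch to the supernormality direction.
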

\begin{proof}
Let $G=\infty(l_1,l_2,l_3)\ncong\infty(s,s,t)=G'$. If $m-2l_i\geq0$, by Lemma \ref{lem2}, 
$$2F_\theta(l_i)+F_\theta(m-2l_i)\geq2F_\theta(s)+F_\theta(t), $$
with equality hold if and only if $l_i=s$. 
\par If $m-2l_i<0$, note that $l_i>2l_i-m$. By Lemma \ref{lem3} and Lemma \ref{lem2}, 
$$2F_\theta(l_i)+F_\theta(m-2l_i)\geq F_\theta(l_i)+F_\theta(m-l_i)+F_\theta(0)>2F_\theta(s)+F_\theta(t). $$
\par So $2F_\theta(l_i)+F_\theta(m-2l_i)\geq2F_\theta(s)+F_\theta(t)$ ($i=1,2$), with both equalities cannot hold at the same time. 
Take $\alpha=\alpha(G')$ and $u'$ as a vertex of degree 3 of $G'$, so $\sum_{e\in E_{G'}(u')}B_{G'}(u',e)=\beta(3)(2F_\theta(s)+F_\theta(m-2s))=1$ by Lemma \ref{lem4}(1). For $G$,  denote $u,v$ as vertices of degree 3 of $G$. By Lemma \ref{lem4}(2), there exists a weighted incidence matrix $B$, such that $B$ is $(\beta(3)F_\theta(l_1),\beta(3)F_\theta(l_1))$, $(\beta(3)F_\theta(l_2),\beta(3)F_\theta(l_2))$, $(\beta(3)F_\theta(m-2l_1),\beta(3)F_\theta(m-2l_2))$ $\alpha$-normal on the two cycles and the $u$-$v$ path respectively (If $l_3=1$, we need to modify the weight of edge $uv$ from $f(3,3)$ to $f(3,2)$ similarly to the proof of Theorem \ref{thm52}). $B$ is consistent with the two cycles by symmetry. And 
$$\sum_{e\in E_G(u)}B(u,e)=\beta(3)(2F_\theta(l_1)+F_\theta(m-2l_1))\geq\beta(3)(2F_\theta(s)+F_\theta(m-2s))=1,$$
$$\sum_{e\in E_G(v)}B(v,e)=\beta(3)(2F_\theta(l_2)+F_\theta(m-2l_2))\geq\beta(3)(2F_\theta(s)+F_\theta(m-2s))=1,$$
with both equalities cannot hold at the same time. So $G$ is consistently and strictly $\alpha$-supernormal. By Lemma \ref{lmm}(5), the result is derived. 
\end{proof}

\begin{thm}\label{thm54}
Let $s\geq3,t\geq2$ be integers, then $\rho_f(\Theta(s,s,t))=\rho_f(\infty(s,s,t))$. 
\end{thm}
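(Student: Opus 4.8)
The plan is to show that $\Theta(s,s,t)$ and $\infty(s,s,t)$ are consistently $\alpha$-normal for one and the same value of $\alpha$; since $k=2$ here, the equality part of Lemma~\ref{lmm}(1) then gives $\rho_f(\Theta(s,s,t))=\alpha^{-1/2}=\rho_f(\infty(s,s,t))$.

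First I would extract $\alpha$ from $G=\Theta(s,s,t)$. Its two degree-$3$ vertices $u,v$ are joined by three internally disjoint paths of lengths $s,s,t$; the reflection reversing all three paths is an automorphism interchanging $u$ and $v$, so they are similar. Because $s\ge3$ and $t\ge2$, each path has length at least $2$, so Lemma~\ref{lem4}(1) applies on all three paths and forces the principal incidence matrix to take value $\beta(3)F_\theta(s)$ on each of the two length-$s$ edges at $u$ and value $\beta(3)F_\theta(t)$ on the length-$t$ edge, where $\theta$ is determined by $\alpha=\alpha(G)$. Since $B_G$ is principal, $\sum_{e\in E_G(u)}B_G(u,e)=1$, i.e.
\begin{equation*}
\beta(3)\bigl(2F_\theta(s)+F_\theta(t)\bigr)=1.\tag{$\ast$}
\end{equation*}
This determines $\theta$, hence $\alpha$, hence $\rho_f(\Theta(s,s,t))=\alpha^{-1/2}$.

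Next I would build an incidence matrix $B$ on $H=\infty(s,s,t)$ using this same $\theta$ and verify consistent $\alpha$-normality. The degree-$3$ vertices $u,v$ of $H$ each lie on a cycle $C_s$ and are joined by the bridge path of length $t$; they are similar via the automorphism swapping the two cycles. Treating each $C_s$ as a closed internal path of length $s$ based at its degree-$3$ vertex (recall an internal path may be a cycle) and applying Lemma~\ref{lem4}(1) gives value $\beta(3)F_\theta(s)$ on each of its two incident cycle edges; applying Lemma~\ref{lem4}(1) to the bridge path of length $t$ gives $\beta(3)F_\theta(t)$ at each end. The edge product conditions and the interior (degree-$2$) sum conditions all hold by internal-path normality, while at $u$ (and symmetrically $v$)
\begin{equation*}
\sum_{e\in E_H(u)}B(u,e)=2\beta(3)F_\theta(s)+\beta(3)F_\theta(t)=\beta(3)\bigl(2F_\theta(s)+F_\theta(t)\bigr)=1
\end{equation*}
by equation~$(\ast)$. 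Consistency is immediate since the only cycles of $H$ are the two copies of $C_s$, on each of which $B$ is symmetric about the degree-$3$ vertex. Thus $H$ is consistently $\alpha$-normal, and Lemma~\ref{lmm}(1) yields $\rho_f(\infty(s,s,t))=\alpha^{-1/2}=\rho_f(\Theta(s,s,t))$.

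The real content — and the main thing to get right — is the coincidence of the normalization equation $(\ast)$ at a degree-$3$ vertex across the two graphs: in $\Theta$ the three incident edges carry $F_\theta(s),F_\theta(s),F_\theta(t)$, whereas in $\infty$ the two cycle edges each carry $F_\theta(s)$ (from reading $C_s$ as a closed internal path) and the bridge edge carries $F_\theta(t)$, giving the identical sum. The care needed is in justifying Lemma~\ref{lem4}(1) on a cycle and in confirming that the assembled $B$ is a genuine, consistent incidence matrix with positive entries; the hypotheses $s\ge3$ (so the cycles are simple and all internal paths have length $\ge2$) and $t\ge2$ (so the bridge edges at $u,v$ have weight $f(3,2)$ and no weight modification as in the Remark after Lemma~\ref{lem4} is needed) are precisely what make each application of Lemma~\ref{lem4} valid.
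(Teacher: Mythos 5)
Your proposal is correct and follows essentially the same route as the paper: establish $\beta(3)\bigl(2F_\theta(s)+F_\theta(t)\bigr)=1$ from the principal incidence matrix of $\Theta(s,s,t)$ via Lemma~\ref{lem4}(1), then exhibit a consistent $\alpha$-normal incidence matrix on $\infty(s,s,t)$ for that same $\alpha$ and conclude by Lemma~\ref{lmm}(1). The one correction: on $\infty(s,s,t)$ you should invoke Lemma~\ref{lem4}(2) (with $l_1=l_2$) rather than part (1), since part (1) presupposes $\alpha=\alpha(H)$ for the graph at hand --- which is precisely what is being proved --- whereas part (2) works for an arbitrary admissible $\alpha$ and yields the same endpoint values $\beta(3)F_\theta(s)$ and $\beta(3)F_\theta(t)$.
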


\begin{proof}
Denote $G=\Theta(s,s,t)$, $\alpha=\alpha(G)$ and $u$ as a vertex of degree 3 of $G'$, so $\sum_{e\in E_{G}(u)}B_{G}(u,e)=\beta(3)(F_\theta(s)+F_\theta(s)+F_\theta(t))=1$ by Lemma \ref{lem4}(1). For $G'=\infty(s,s,t)$, denote $u',v'$ as vertices of degree 3 of $G'$. By Lemma \ref{lem4}(2), There exists a weighted incidence matrix $B$, such that $B$ is $(\beta(3)F_\theta(s),\beta(3)F_\theta(s))$, $(\beta(3)F_\theta(s),\beta(3)F_\theta(s))$, $(\beta(3)F_\theta(t),\beta(3)F_\theta(t))$ $\alpha$-normal on the two cycles and the $u'$-$v'$ path respectively. $B$ is consistent with the two cycles by symmetry. And $\sum_{e\in E_{G'}(u')}B(u',e)=\sum_{e\in E_{G'}(v')}B(v',e)=\beta(3)(F_\theta(s)+F_\theta(s)+F_\theta(t))=1$, so $\infty(s,s,t)$ is consistently $\alpha$-normal. By Lemma \ref{lmm}(1), the result is derived. 
\end{proof}

\begin{thm}\label{thm55}
For any $\infty^*$-type graph $\infty^*(l_1,l_2)$ with size $m=l_1+l_2\geq9$, there exists a $\Theta$-type graph $G'$ such that $\rho_f(G')<\rho_f(\infty^*(l_1,l_2))$. 
\end{thm}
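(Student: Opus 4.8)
The plan is to realize $\infty^*(l_1,l_2)$ and a suitably \emph{balanced} $\Theta$-graph as same-order competitors and to separate their $f$-spectral radii through the subnormal branch of the Lu--Man machinery (Lemma~\ref{lmm}(3)), exactly in the spirit of the proofs of Theorems~\ref{thm52} and~\ref{thm53}. Write $\rho=\rho_f(\infty^*(l_1,l_2))$, set $\alpha=\alpha(\infty^*(l_1,l_2))=\rho^{-2}$ and let $\theta$ be the associated parameter; by the Remark following Lemma~\ref{lem2} one has $\alpha'\in(0,\tfrac14]$, so $\theta\ge0$ and Lemmas~\ref{lem2},~\ref{lem3},~\ref{lem44} are all available. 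Since $\infty^*(l_1,l_2)$ is consistently $\alpha$-normal with its principal incidence matrix $B_{\infty^*}$, I first compute the normalization at the unique degree-$4$ vertex $w$. Each cycle $C_{l_i}$ is a closed internal path based at $w$, and the reflection of $C_{l_i}$ that fixes $w$ and fixes the other cycle pointwise is an automorphism of the whole graph; it supplies the symmetry that lets Lemma~\ref{lem4}(1) apply, giving $B_{\infty^*}(w,e)=\beta(4)F_\theta(l_i)$ for each of the two edges of $C_{l_i}$ at $w$. Summing the four edges at $w$ (whose row sum is $1$ for the principal matrix) yields the key identity
$$2\beta(4)\bigl(F_\theta(l_1)+F_\theta(l_2)\bigr)=1.$$

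Next I choose the competitor. With $m=l_1+l_2$ and $q:=\lfloor m/3\rfloor$, take $a,b,c\in\{q,q+1\}$ with $a+b+c=m$ and set $G'=\Theta(a,b,c)$. Since $m\ge9$ we have $a,b,c\ge q\ge3$, so $G'$ is a genuine $\Theta$-graph of the same order $m-1$ with no internal path of length $1$, which means the weight-modification caveat of the earlier proofs is not needed. Using Lemma~\ref{lem4}(2) with the symmetric split on each of the three $u'$--$v'$ paths, I assemble a weighted incidence matrix $B'$ on $G'$ in which every degree-$2$ vertex has row sum $1$, every edge satisfies $\prod_{v\in e}B'(v,e)=\alpha\,w(e)^2$, and each degree-$3$ vertex has row sum $\beta(3)\bigl(F_\theta(a)+F_\theta(b)+F_\theta(c)\bigr)$. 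Thus $G'$ is strictly $\alpha$-subnormal as soon as $\beta(3)\bigl(F_\theta(a)+F_\theta(b)+F_\theta(c)\bigr)<1$, and then Lemma~\ref{lmm}(3) gives $\rho_f(G')<\alpha^{-1/2}=\rho$, which is the claim.

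It remains to verify this last inequality, and this is where the hypothesis $m\ge9$ is spent. Because $f$ is increasing in its first variable, $\beta(4)\ge\beta(3)>0$, so by the identity above it suffices to prove the degree-free comparison $F_\theta(a)+F_\theta(b)+F_\theta(c)<2\bigl(F_\theta(l_1)+F_\theta(l_2)\bigr)$; the strictness then survives the passage $\beta(3)\le\beta(4)$. I would establish this through the chain
$$F_\theta(a)+F_\theta(b)+F_\theta(c)\le 3F_\theta(q)<4F_\theta(2q)\le 4F_\theta(m/2)\le 2\bigl(F_\theta(l_1)+F_\theta(l_2)\bigr),$$
whose four steps use, in order: monotonicity of $F_\theta$ together with $a,b,c\ge q$; Lemma~\ref{lem44} with $x=q\ge3$; monotonicity again together with $2q\ge m/2$ (which holds already for $m\ge8$); and the convexity of $F_\theta$ from Lemma~\ref{lem2} applied to $l_1+l_2=m$.

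The main obstacle is the middle strict inequality $3F_\theta(q)<4F_\theta(2q)$: this is precisely Lemma~\ref{lem44}, and its threshold $x\ge3$ is exactly what forces $q\ge3$, i.e.\ $m\ge9$. The balanced partition is the natural choice here because it makes the ``$3$ versus $4$'' comparison in Lemma~\ref{lem44} match the degree-$3$-versus-degree-$4$ structure of the two graphs; any less symmetric $\Theta$ would only increase the left side by convexity. The sole remaining delicate point is the justification that Lemma~\ref{lem4}(1) may be invoked for the closed internal paths $C_{l_i}$, where the two ``endpoints'' coincide at $w$; this rests on the reflection automorphism of each cycle playing the role of the similar-vertices hypothesis, which is what produces the symmetry $B_{\infty^*}(w,e_1)=B_{\infty^*}(w,e_{l_i})$ needed for the key identity.
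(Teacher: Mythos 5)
Your proposal is correct and follows essentially the same route as the paper's proof: the identity $2\beta(4)(F_\theta(l_1)+F_\theta(l_2))=1$ at the degree-$4$ vertex via Lemma~\ref{lem4}(1), the balanced $\Theta$-competitor, the chain through $3F_\theta(\lfloor m/3\rfloor)<4F_\theta(2\lfloor m/3\rfloor)$ from Lemma~\ref{lem44} combined with monotonicity, convexity and $\beta(3)\le\beta(4)$, and the conclusion via strict $\alpha$-subnormality and Lemma~\ref{lmm}(3). The only (welcome) additions are your explicit justification of the similar-vertices hypothesis for the closed internal paths via the reflection automorphism and the observation that $a,b,c\ge3$ avoids the length-one weight-modification caveat.
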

\begin{proof}
Denote $G=\infty^*(l_1,l_2)$, $\alpha=\alpha(G)$  and $u$ as the vertex of degree 4 of $G$, then $\sum_{e\in E_{G}(u)}B_{G}(u,e)=\beta(4)(2F_\theta(l_1)+2F_\theta(l_2))=1$ by Lemma \ref{lem4}(1). 
\par Let $s,t$ be positive integers such that $2s+t=m$, $|s-t|\leq1$. Take $G'=\Theta(s,s,t)$ and $u',v'$ as vertices of degree 3 of $G'$. Then there exists a weighted incidence matrix $B$, such that $B$ is $(\beta(3)F_\theta(s),\beta(3)F_\theta(s))$, $(\beta(3)F_\theta(s),\beta(3)F_\theta(s))$, $(\beta(3)F_\theta(t),\beta(3)F_\theta(t))$ $\alpha$-normal on the three $u'$-$v'$ paths respectively by Lemma \ref{lem4}(2). Since $\lfloor\frac m3\rfloor\geq3$, $2\lfloor\frac m3\rfloor\geq\lceil\frac m2\rceil$, by Lemma \ref{lem2} and Lemma \ref{lem44}, we have
\begin{align*}
\sum_{e\in E_{G'}(u')}B(u',e)=\sum_{e\in E_{G'}(v')}B(v',e)&=\beta(3)(2F_\theta(s)+F_\theta(t))\\
&\leq\beta(4)\cdot3F_\theta\left(\left\lfloor\frac m3\right\rfloor\right)\\
&<\beta(4)\cdot4F_\theta\left(2\left\lfloor\frac m3\right\rfloor\right)\\
&\leq\beta(4)\cdot4F_\theta\left(\left\lceil\frac m2\right\rceil\right)\\
&\leq\beta(4)(2F_\theta(l_1)+2F_\theta(l_2))=1. 
\end{align*}
So $G'$ is strictly $\alpha$-subnormal. By Lemma \ref{lmm}(3), the result is derived. 
\end{proof}

\begin{remark}
If $\beta(4)^{-1}+\beta(2)^{-1}\leq2\beta(3)^{-1}$ (it holds for $f\equiv1$, for instance), then the result can be proved directly without using Lemma \ref{lem44}: 
\begin{align*}
\beta(3)(2F_\theta(s)+F_\theta(t))&<\beta(3)(F_\theta(l_1)+F_\theta(l_2)+F_\theta(0))\\&\leq\frac{2}{\beta(4)^{-1}+\beta(2)^{-1}}\left(F_\theta(l_1)+F_\theta(l_2)+F_\theta(0)\right)\\
&=\frac{2\beta(4)}{\beta(4)+1}\left(\frac1{2\beta(4)}+\frac12\right)\\
&=1. 
\end{align*}
But for those $f$ corresponding to common chemical indices (such as the first Zagreb index $f=x+y$, the second Zagreb index $f=xy$, see also Table 1 of \cite{li2021trees}), $\beta(4)^{-1}+\beta(2)^{-1}\leq2\beta(3)^{-1}$ does not hold. 
\end{remark}

From Theorem \ref{thm51}, \ref{thm52}, \ref{thm53}, \ref{thm54}, \ref{thm55}, we derive our main result. 
\begin{thm}
Let $s,t$ be positive integers such that $2s+t-1=n\geq8$, $|s-t|\leq1$. Then $\Theta(s,s,t)$ and $\infty(s,s,t)$ are the extremal graphs with the smallest $\rho_f$ in $\mathcal{B}_n$. 
\end{thm}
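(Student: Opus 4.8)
The plan is to obtain the statement as the assembly of Theorems~\ref{thm51}--\ref{thm55}, with the only real work being the verification that the parameter hypotheses of those theorems are all met for $n\geq8$. First I would record the elementary identity relating order and size: every $G\in\mathcal{B}_n$ has cyclomatic number $2$, i.e.\ $|E(G)|-n+1=2$, so its size is $m=n+1$; thus $n\geq8$ is exactly $m\geq9$. By Theorem~\ref{thm51}, any graph attaining the minimum of $\rho_f$ in $\mathcal{B}_n$ satisfies $G=\widehat{G}$, hence has no pendant vertices and is therefore one of the three base types $\Theta(l_1,l_2,l_3)$, $\infty(l_1,l_2,l_3)$, or $\infty^*(l_1,l_2)$, each of size $m=n+1$. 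The minimization over $\mathcal{B}_n$ thus reduces to a minimization over these three families.

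Next I would eliminate the $\infty^*$ family and identify the optimal representative of the other two. Since $m\geq9$, Theorem~\ref{thm55} supplies, for every $\infty^*$-type graph, a strictly smaller $\Theta$-type graph; consequently no extremal graph is of $\infty^*$ type. Among $\Theta$-type graphs of size $m\geq6$, Theorem~\ref{thm52} gives $\Theta(s,s,t)$ (with $2s+t=m$, $|s-t|\leq1$) as the unique minimizer, and among $\infty$-type graphs of size $m\geq8$, Theorem~\ref{thm53} gives $\infty(s,s,t)$ as the unique minimizer. Both thresholds $m\geq6$ and $m\geq8$ are comfortably satisfied because $m\geq9$. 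Hence the minimum of $\rho_f$ over all base graphs equals $\min\{\rho_f(\Theta(s,s,t)),\,\rho_f(\infty(s,s,t))\}$.

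Finally I would show these two candidates tie, so that both are extremal. The decisive arithmetic check is that the balanced parameters lie in the range required by Theorem~\ref{thm54}: from $2s+t=m$ together with $|s-t|\leq1$ one gets $(m-1)/3\leq s\leq(m+1)/3$, whence $s\geq3$ for $m\geq9$, and correspondingly $t=m-2s\geq(m-2)/3>2$, so $t\geq2$. With $s\geq3$ and $t\geq2$ in hand, Theorem~\ref{thm54} gives $\rho_f(\Theta(s,s,t))=\rho_f(\infty(s,s,t))$, so both graphs attain the common minimum over base graphs, and therefore over all of $\mathcal{B}_n$ by the reduction above. I expect the main (indeed the only nonroutine) obstacle to be precisely this bookkeeping on parameter ranges: one must confirm that $m=n+1\geq9$ simultaneously clears the thresholds of Theorems~\ref{thm52}, \ref{thm53}, \ref{thm55} and that the balanced $s,t$ satisfy $s\geq3,\ t\geq2$, paying particular attention to the boundary case $n=8$ (so $m=9$, $s=t=3$), which is the tightest instance and lands exactly inside the valid range of Theorem~\ref{thm54}.
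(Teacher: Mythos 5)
Your proposal is correct and follows exactly the route the paper intends: the paper derives this theorem by simply citing Theorems~\ref{thm51}--\ref{thm55}, and your assembly (reduce to base graphs via Theorem~\ref{thm51}, eliminate $\infty^*$-type via Theorem~\ref{thm55} since $m=n+1\geq9$, identify the balanced $\Theta$- and $\infty$-minimizers via Theorems~\ref{thm52} and \ref{thm53}, and tie them via Theorem~\ref{thm54} after checking $s\geq3$, $t\geq2$) is precisely the intended argument. The parameter bookkeeping you carry out, including the boundary case $n=8$, is accurate and in fact more explicit than what the paper records.
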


\begin{remark}
For $n=5,6,7$, the result does not hold for some $f$. For example, if we take $f(2,2)=1$ and $f(3,2)=f(4,2)=2$, by direct calculation, 
$$\rho_f(\infty^*(3,3))\approx4.5311<4.8990\approx\rho_f(\Theta(2,2,2)), $$
$$\rho_f(\infty^*(4,3))\approx4.3914<4.5949\approx\rho_f(\Theta(3,2,2)), $$
$$\rho_f(\infty^*(4,4))\approx4.2426<4.2930\approx\rho_f(\Theta(3,3,2)). $$
On the contrary, as we expected, 
$$\rho_f(\infty^*(5,4))\approx4.2028>4=\rho_f(\Theta(3,3,3)), $$
$$\rho_f(\infty^*(5,5))\approx4.1613>3.9169\approx\rho_f(\Theta(4,3,3)). $$
\end{remark}

\section{The $c$-cyclic graph with the largest $\rho_f$}
\par In this section, we will discuss the extremal graph of cyclomatic number $c$ and order $n$ with the largest $\rho_f$. We assume that $f(x,y)>0$ is a symmetric real function, increasing and convex in variable $x$. 
\par Li et al. \cite{li2021trees} discussed the effect of the Kelmans operation on $\rho_f$ of trees, which can be directly generalized to common connected graphs. 
\begin{lem}[\cite{li2021trees}]\label{kelman}
Let $G$ be a connected graph and $u, v$ be two vertices of $G$ such that $u\nsim v$. Let $G'$ be the graph obtained by replacing edge $uw$ by a new edge $vw$ for all vertex $w$ such that $u\sim w\nsim v$ (this process can be called as using Kelmans operation on vertex $u$ and $v$ of $G$). If $G'$ is connected and $G\ncong G'$, then $\rho_f(G)<\rho_f(G')$. 
\end{lem}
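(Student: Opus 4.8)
The plan is to use the Rayleigh quotient together with the Perron--Frobenius theorem, testing the matrix $A_f(G')$ against the principal eigenvector of $A_f(G)$. Let $\boldsymbol{x}>0$ be the principal eigenvector of $A_f(G)$ with $\|\boldsymbol{x}\|=1$ and $\rho:=\rho_f(G)=\boldsymbol{x}^{\top}A_f(G)\boldsymbol{x}$. Since $G'$ is connected, $A_f(G')$ is nonnegative, symmetric and irreducible, so $\rho_f(G')\ge \boldsymbol{x}^{\top}A_f(G')\boldsymbol{x}$, with equality forcing $\boldsymbol{x}$ to be a principal eigenvector of $A_f(G')$ as well. Thus it suffices to prove $\Delta:=\boldsymbol{x}^{\top}\bigl(A_f(G')-A_f(G)\bigr)\boldsymbol{x}\ge 0$ and then rule out the equality case. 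I will carry out the argument under the assumption $x_u\le x_v$; this reflects the direction of the operation (edges are transported toward $v$), and it is genuinely needed, since for a nearly constant $f$ the transformation in the opposite direction decreases $\rho_f$, exactly as happens for the ordinary adjacency matrix.

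Next I would set up the degree bookkeeping, which is the feature distinguishing the $f$-weighted case from the plain adjacency case. Partition the neighborhoods by $R:=N(u)\cap N(v)$, $S:=N(u)\setminus N(v)$ and $T:=N(v)\setminus N(u)$, and write $s:=|S|$; since $G\ncong G'$ we have $s\ge 1$. The operation deletes the edges $uw$ ($w\in S$) and inserts the edges $vw$ ($w\in S$); consequently only the degrees of $u$ and $v$ change, namely $d_{G'}(u)=d_u-s$ and $d_{G'}(v)=d_v+s$, while every other degree is preserved, so $d_v+s-d_u=|T|\ge 0$ and $d_v-(d_u-s)=|T|\ge 0$. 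Because the weight of an edge depends on the degrees of both endpoints, the weights on all edges incident to $u$ and to $v$ move, not merely the transported ones. I would group $\Delta$ into three blocks: (i) the transported edges $w\in S$, contributing $\sum_{w\in S}\bigl(f(d_v+s,d_w)x_v-f(d_u,d_w)x_u\bigr)x_w$; (ii) the retained edges at $v$ (to $R\cup T$), each contributing a nonnegative weight change $\bigl(f(d_v+s,d_w)-f(d_v,d_w)\bigr)x_vx_w\ge 0$ by monotonicity; and (iii) the retained edges at $u$ (to $R$), each contributing a negative weight change $\bigl(f(d_u-s,d_w)-f(d_u,d_w)\bigr)x_ux_w\le 0$.

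The heart of the argument is to absorb the negative block (iii) into the positive blocks, and this is where convexity enters. For each common neighbor $w\in R$, I would pair the loss $\bigl(f(d_u,d_w)-f(d_u-s,d_w)\bigr)x_u$ at $u$ against the gain $\bigl(f(d_v+s,d_w)-f(d_v,d_w)\bigr)x_v$ at $v$. Convexity of $f$ in its first variable makes the increment of $f(\,\cdot\,,d_w)$ over an interval of length $s$ nondecreasing in the left endpoint; since the left endpoint $d_u-s=|R|$ of the first interval is at most the left endpoint $d_v=|R|+|T|$ of the second, the gain per unit weight at $v$ dominates the loss at $u$, and together with $x_u\le x_v$ this gives $\bigl(f(d_v+s,d_w)-f(d_v,d_w)\bigr)x_v\ge \bigl(f(d_u,d_w)-f(d_u-s,d_w)\bigr)x_u$ for every $w\in R$. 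Hence blocks (ii) and (iii) already sum to something nonnegative. For the transported block (i) I would use monotonicity, $f(d_v+s,d_w)\ge f(d_u,d_w)$ (because $d_v+s\ge d_u$), and $x_v\ge x_u$ to see each summand is nonnegative. Therefore $\Delta\ge 0$.

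Finally I would establish strictness via the equality analysis. If $\rho_f(G')=\rho$ then $\boldsymbol{x}^{\top}A_f(G')\boldsymbol{x}=\rho$, forcing $\Delta=0$, whence $\boldsymbol{x}$ is a principal eigenvector of $A_f(G')$ and $A_f(G')\boldsymbol{x}=\rho\boldsymbol{x}$. Reading off the $v$-th coordinate of both $A_f(G')\boldsymbol{x}=\rho\boldsymbol{x}$ and $A_f(G)\boldsymbol{x}=\rho\boldsymbol{x}$ and subtracting gives $\sum_{w\in R\cup T}\bigl(f(d_v+s,d_w)-f(d_v,d_w)\bigr)x_w+\sum_{w\in S}f(d_v+s,d_w)x_w=0$; every summand is nonnegative and the second sum is strictly positive (as $s\ge 1$, $f>0$ and $\boldsymbol{x}>0$), a contradiction. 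Hence $\rho_f(G')>\rho_f(G)$. I expect the main obstacle to be precisely the degree-change bookkeeping in block (iii): unlike the adjacency case, transporting edges lowers $d_u$ and thereby weakens every surviving edge at $u$, and the whole lemma hinges on showing, through convexity and the interval comparison above, that this loss is always outweighed by the simultaneous strengthening at $v$. A secondary point to treat carefully is the direction hypothesis $x_u\le x_v$, without which the conclusion can fail.
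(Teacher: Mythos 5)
First, a point of reference: the paper itself does not prove this lemma --- it is quoted from \cite{li2021trees} --- so your attempt can only be judged against the standard argument. The computational core of your proof is correct and is essentially that standard argument: test $A_f(G')$ against the unit Perron vector $\boldsymbol{x}$ of $A_f(G)$, split $\Delta=\boldsymbol{x}^{\top}(A_f(G')-A_f(G))\boldsymbol{x}$ into the transported edges, the retained edges at $v$, and the retained edges at $u$; the degree bookkeeping ($d_{G'}(u)=d_u-s$, $d_{G'}(v)=d_v+s$, all other degrees unchanged) is right, the convexity step is right (increments of $f(\cdot,d_w)$ over intervals of length $s$ are nondecreasing in the left endpoint, and $d_u-s=|R|\le |R|+|T|=d_v$), and the strictness argument via the $v$-th coordinate of the two eigenvector equations is valid, using $s\ge 1$ from $G\ncong G'$.

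The genuine gap is your handling of the assumption $x_u\le x_v$. The lemma carries no such hypothesis: it asserts the conclusion for the operation that moves edges toward $v$ regardless of the Perron vector, so as written your proof covers only half the cases. Worse, your stated justification --- that the assumption is ``genuinely needed'' because for nearly constant $f$ the opposite-direction transformation decreases $\rho_f$, ``exactly as happens for the ordinary adjacency matrix'' --- is false. Moving the edges from $u$ to $v$ and moving them from $v$ to $u$ produce \emph{isomorphic} graphs: the transposition swapping $u$ and $v$ and fixing all other vertices is an isomorphism, because after either operation one of the two vertices is adjacent exactly to $N(u)\cap N(v)$ and the other exactly to $(N(u)\cup N(v))\setminus\{u,v\}$, while all other adjacencies (and hence all degrees, and hence all weights) agree. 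Consequently both directions yield the same $\rho_f$; for the ordinary adjacency matrix the Kelmans operation increases the spectral radius in either direction. This same observation is precisely what closes your gap: if $x_u>x_v$, apply your argument to the operation moving edges from $v$ to $u$, producing $G''$ with $\rho_f(G)<\rho_f(G'')=\rho_f(G')$ since $G''\cong G'$. With that one line added (and the erroneous claim removed), your proof is complete; without it, the case $x_u>x_v$ is not merely unproved but explicitly --- and incorrectly --- asserted to fail.
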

The authors in \cite{li2021trees,zheng2023extremal,ye2023extremal} obtained their results by consecutively using Kelmans operation and Lemma \ref{kelman}. Or we can have the following short but revealing conclusion to show exactly what the Kelmans operation can do in determining the extremal graph. 
\begin{thm}\label{thm4}
Let $G$ be the graph with the largest $\rho_f$ in $\mathcal{G}_{c,n}$, $c\geq0$, then $P_5$, $C_5$, $C_3\cdot P_3$, $\infty^*(3,3)$, $\Theta(1,2,3)$, $K_5-P_4$ are forbidden from $G$ (which means they cannot exist as an induced subgraph of $G$; $C_3\cdot P_3$ is obtained from $C_3$ by adding a pendant path of length 2 on a vertex of $C_3$). 
\end{thm}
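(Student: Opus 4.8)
The plan is to let the Kelmans operation of Lemma~\ref{kelman} do all the work. The key bookkeeping observation is that a Kelmans operation on a non-adjacent pair $u,v$ deletes an edge $uw$ and installs $vw$ for every $w$ with $u\sim w\nsim v$, so it alters neither the order $n$ nor the number of edges, and hence preserves the cyclomatic number $c$. Therefore, whenever the output $G'$ is connected we have $G'\in\mathcal{G}_{c,n}$, and if in addition $G\ncong G'$ then Lemma~\ref{kelman} yields $\rho_f(G')>\rho_f(G)$, contradicting the maximality of $G$. Thus it suffices, for each of the six configurations $F$, to assume $F$ occurs as an induced subgraph of $G$ and to exhibit one Kelmans move whose output is connected and non-isomorphic to $G$.

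First I would isolate a reusable connectivity criterion: \emph{if $u,v$ are non-adjacent and have a common neighbor $c$, the Kelmans operation on $(u,v)$ keeps $G'$ connected.} The argument is short. After the move every former neighbor of $u$ is adjacent to $v$ (common neighbors already were, private ones now are), while $u$ retains exactly its common neighbors with $v$; in particular $u\sim c\sim v$ keeps $u$ attached. For any other vertex $x$, a path from $x$ to $v$ in $G$ that avoids $u$ survives intact, since only $u$-incident edges are removed, while one passing through $u$, say $\dots a\,u\,b\dots v$, reroutes because $a\in N_G(u)$ forces $a\sim v$ in $G'$. Since each configuration below supplies a common neighbor inside $F$, connectivity will be automatic.

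It then remains to pick, in each induced $F$, a non-adjacent pair $(u,v)$ that has a common neighbor and for which $u$ owns a neighbor inside $F$ that is not adjacent to $v$; this private neighbor forces the edge set to change. Natural choices are as follows. For $P_5=v_1v_2v_3v_4v_5$ take $(v_2,v_4)$, with common neighbor $v_3$ and private neighbor $v_1$. For $C_5$ take any two vertices at distance $2$. For $C_3\cdot P_3$ take a triangle vertex other than the attachment vertex together with the middle vertex of the pendant path, the common neighbor being the attachment vertex. For $\infty^*(3,3)$ take one vertex from each triangle, the common neighbor being the shared vertex. For $\Theta(1,2,3)$ take the interior vertices of the length-$2$ and length-$3$ paths, with a branch vertex as common neighbor. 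For $K_5-P_4$, writing the deleted path as $a\,b\,c\,d$ and the fifth vertex as $e$, take $(b,c)$, with common neighbor $e$ and private neighbor $d$; this move converts the deleted $P_4$ into a deleted star $K_{1,3}$. In every case the common neighbor secures connectivity and the private neighbor secures $G'\neq G$ at the level of edges.

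The hard part will be upgrading $G'\neq G$ to the isomorphism statement $G\ncong G'$ that Lemma~\ref{kelman} actually demands. Generically this is free: the move strictly lowers $d_G(u)$ and raises $d_G(v)$ by the number $k\ge1$ of private neighbors, so the degree multiset of $G$ changes and no isomorphism can exist. The only delicate case is the degenerate ``degree-swap'' $d_G(u)=d_G(v)+k$, in which the multiset is preserved. Here I would either reverse the direction of the operation — since the degree gap is strict, the move on $(v,u)$ cannot also be a swap, so it changes the multiset — or, when the reverse move is trivial, pass to a different admissible pair, each configuration offering several. Carrying out this non-isomorphism check across all six configurations is the one genuinely case-sensitive step; the stability of the class $\mathcal{G}_{c,n}$ and of connectedness are uniform and already handled.
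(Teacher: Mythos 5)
Your proposal is correct and uses the same engine as the paper: one Kelmans move inside the offending induced subgraph, connectivity via a common neighbor, and Lemma~\ref{kelman}. The difference is purely organizational, but worth noting. The paper does not treat the six graphs separately: it observes that each of $P_5$, $C_5$, $C_3\cdot P_3$, $\infty^*(3,3)$, $\Theta(1,2,3)$, $K_5-P_4$ contains a single common pattern, namely five vertices with $v_1v_2,v_1v_3,v_2v_4,v_3v_5\in E(G)$ and $v_2v_3,v_2v_5,v_3v_4\notin E(G)$, and applies the Kelmans operation once to the pair $(v_2,v_3)$. Every pair you chose (e.g.\ $(v_2,v_4)$ in $P_5$, the two non-shared triangle vertices in $\infty^*(3,3)$, $(b,c)$ in $K_5-P_4$) is exactly an instance of this pattern --- a common neighbor plus a private neighbor on \emph{each} side --- so your six cases collapse into the paper's one. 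Where you go beyond the paper is the verification that $G\ncong G'$: the paper simply asserts it, while your degree-multiset argument (the multiset changes unless $d(u)=d(v)+k$, and in that degenerate case the reversed operation, which is nontrivial precisely because the pattern gives $v$ a private neighbor too, must change the multiset) is a legitimate and complete way to close that gap; this is arguably a point where your write-up is more careful than the source. The one caution is that your fallback ``pass to a different admissible pair'' is vaguer than needed --- the reversal argument already suffices in all six configurations, since both vertices of the chosen pair always have a private neighbor inside the forbidden subgraph.
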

\begin{proof}
Otherwise, there exists $v_1,v_2,v_3,v_4,v_5\in V(G)$, such that $v_1v_2, v_1v_3, v_2v_4, v_3v_5\in E(G)$ and $v_2v_3, v_2v_5, v_3v_4\notin E(G)$. Then we use Kelmans operation on vertex $v_2$ and $v_3$ of $G$ to obtain $G'$, $G'\ncong G$. Since $G'$ is connected, $G'\in\mathcal{G}_{c,n}$. By Lemma \ref{kelman}, $\rho_f(G')>\rho_f(G)$, a contradiction. 
\end{proof}

If $f(x,y)$ has the property $P^*$, then the condition $u\nsim v$ in Lemma \ref{kelman} can be removed. Follow the same approches above, we can obtain that $P_4$ and $C_4$ are also forbidden from the extremal graph $G$ and it's quite easy to determine the extremal tree \cite{zheng2023extremal}, unicyclic \cite{zheng2023extremal} and bicyclic graph \cite{ye2023extremal} with the largest $\rho_f$ in this case. 
\par The following results do not assume that $f(x,y)$ has the property $P^*$ and are obtained directly from Theorem \ref{thm4}. 

\begin{cor}\label{pend}
Let $G$ be the graph with the largest $\rho_f$ in $\mathcal{G}_{c,n}$, $c\geq1$, $v\in V(G)$ and $v\notin V(\widehat{G})$, then $d(v)=1$. 
\end{cor}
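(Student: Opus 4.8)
The plan is to argue by contradiction, using the forbidden induced subgraphs supplied by Theorem \ref{thm4}; in fact only $P_5$ and $C_3\cdot P_3$ will be needed. Suppose some vertex $v\in V(G)\setminus V(\widehat{G})$ has $d(v)\geq2$; I would then manufacture an induced copy of $P_5$ or of $C_3\cdot P_3$ in $G$, contradicting Theorem \ref{thm4}. Two structural facts set the stage. First, since $\widehat{G}$ is obtained from $G$ by repeatedly deleting pendant vertices, it has minimum degree at least $2$, so every vertex of $\widehat{G}$ has at least two neighbours lying in $\widehat{G}$. Second, the vertices outside $\widehat{G}$ form a forest whose trees each meet $\widehat{G}$ in exactly one vertex: if a pendant tree touched $\widehat{G}$ at two vertices it would create a cycle forcing those tree vertices into $\widehat{G}$. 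Hence, writing $T$ for the pendant tree containing $v$ and $w\in V(\widehat{G})$ for its attachment vertex, every vertex of $T\setminus\{w\}$ has all of its neighbours inside $T$.

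The core step is to build one long induced path through $v$. Root $T$ at $w$. Since $d(v)\geq2$, the vertex $v$ has a child, giving a downward path $q_r-\cdots-q_1-v$ to a leaf $q_r$ with $r\geq1$; since $v\notin V(\widehat{G})$ there is an upward path $v-p_1-\cdots-p_s=w$ with $s\geq1$. Choose any neighbour $w_1\in V(\widehat{G})$ of $w$ (one exists as $w$ has base-degree at least $2$) and consider
\[
q_r-\cdots-q_1-v-p_1-\cdots-p_{s-1}-w-w_1 .
\]
This is an induced path: its portion up to $w$ is the unique $q_r$--$w$ path in the tree $T$, hence chordless, while $w_1$, a base vertex different from $w$, is adjacent to none of the interior vertices $q_i,v,p_j$, all of which lie in $T\setminus\{w\}$ and therefore have neighbours only in $T$. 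The path has $r+s+2$ vertices, so whenever $r+s\geq3$ any five consecutive vertices form an induced $P_5$, the desired contradiction.

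It then remains to treat the degenerate case $r=s=1$, i.e.\ $v\sim w$ and every child of $v$ is a leaf; fix one such leaf $q_1$, so $q_1-v-w-w_1$ is an induced $P_4$. Here I would use that $w_1$ also has base-degree at least $2$, hence has a neighbour $x\in V(\widehat{G})$ with $x\neq w$. If $x\not\sim w$, then $x$ meets $\{q_1,v,w,w_1\}$ only at $w_1$ and $q_1-v-w-w_1-x$ is an induced $P_5$. If instead $x\sim w$, then $\{w,w_1,x\}$ is a triangle which, appended to the pendant path $w-v-q_1$, yields an induced $C_3\cdot P_3$, since $v$ and $q_1$ attach to $\widehat{G}$ only at $w$. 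Either outcome contradicts Theorem \ref{thm4}, so no vertex outside $\widehat{G}$ can have degree exceeding $1$.

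I expect the main obstacle to be bookkeeping the inducedness near the base rather than any deep idea: one must verify that the single base neighbour $w_1$ (and later $x$) introduces no chord to the tree part — which is precisely what the ``unique attachment vertex'' property guarantees — and one must correctly isolate the short-path case $r=s=1$, where the tree path alone is too short to contain $P_5$ and the triangle alternative giving $C_3\cdot P_3$ must be invoked.
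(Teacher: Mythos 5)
Your proof is correct and follows exactly the paper's approach: the paper's own argument is the one-line observation that a non-base vertex of degree at least $2$ would force an induced $P_5$ or $C_3\cdot P_3$, contradicting Theorem \ref{thm4}, and your write-up simply supplies the (correct) case analysis behind that observation.
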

\begin{proof}
Otherwise, $P_5$ or $C_3\cdot P_3$ is an induced subgraph of $G$, a contradiction to Theorem \ref{thm4}. 
\end{proof}

\begin{cor}
Let $G$ be the graph with the largest $\rho_f$ in $\mathcal{U}_n$, then $\widehat{G}=C_3$. 
\end{cor}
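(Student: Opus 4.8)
The plan is to argue by contradiction: assume $G$ is extremal in $\mathcal{U}_n$ and that its base is not $C_3$. Since the cyclomatic number is $1$, the base $\widehat{G}$ is the unique cycle of $G$, say $C_g=v_1v_2\cdots v_gv_1$ with $g\geq4$. By Corollary \ref{pend} every vertex outside $\widehat{G}$ is pendant, so $G$ is obtained from $C_g$ by attaching pendant vertices to some of the $v_i$. First I would dispose of the long cycles: if $g=5$ the five cycle vertices induce a $C_5$, and if $g\geq6$ any five consecutive cycle vertices induce a $P_5$ (the cycle is chordless and too long to create a wrap-around edge among five consecutive vertices, and the pendants lie outside this set). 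Both are forbidden by Theorem \ref{thm4}, so $g\leq4$, and it remains to rule out $g=4$.

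For $g=4$ I would first pin down where the pendants can sit. If two opposite cycle vertices, say $v_1$ and $v_3$, both carried pendants $w$ and $w'$, then $w\,v_1v_2v_3\,w'$ would be an induced $P_5$, again forbidden. Hence the pendants are confined to at most two adjacent cycle vertices, and consequently at least two consecutive cycle vertices, say $v_3$ and $v_4$, have degree $2$. I would then suppress the degree-$2$ vertex $v_3$: delete it and join its two neighbours, producing a connected graph $H$ of order $n-1$ whose base is the triangle $v_1v_2v_4$. Subdividing the edge $v_2v_4$ of $H$ re-inserts $v_3$ and returns exactly $G$, so it suffices to prove $\rho_f(G)\leq\rho_f(H)$ and then add a single pendant back to $H$: this yields a graph $H^{+}\in\mathcal{U}_n$ with base $C_3$, and since $f$ is increasing, Lemma \ref{subg} gives $\rho_f(H^{+})>\rho_f(H)\geq\rho_f(G)$, contradicting the extremality of $G$. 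The inequality $\rho_f(G)\leq\rho_f(H)$ is where Lemma \ref{lem1} enters, applied to $H$ with $v=v_4$ and the two triangle-neighbours $v_1,v_2$ of $v_4$.

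The hard part, and the step I would spend the most care on, is verifying the hypothesis of Lemma \ref{lem1}, namely $y_{v_4}\leq y_{v_1}$ and $y_{v_4}\leq y_{v_2}$, where $y_i=f(d_i,2)x_i$ and $\boldsymbol{x}$ is the Perron vector of $H$. Since $v_4$ is a degree-$2$ vertex with no pendant while $v_1,v_2$ have degree at least $2$, one has $f(d_{v_4},2)=f(2,2)\leq f(d_{v_1},2),f(d_{v_2},2)$, so it is enough to prove $x_{v_4}\leq x_{v_1}$ and $x_{v_4}\leq x_{v_2}$. I would obtain these directly from the eigenequations of $H$: subtracting the equations at $v_1$ and $v_4$ and using $f(d_{v_1},d_{v_2})\geq f(2,d_{v_2})$ (monotonicity of $f$) together with the nonnegativity of the pendant contributions yields $(\rho+f(d_{v_1},2))(x_{v_1}-x_{v_4})\geq0$, and symmetrically $(\rho+f(d_{v_2},2))(x_{v_2}-x_{v_4})\geq0$.

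When the pendants sit on only one cycle vertex, the two low-degree vertices of the triangle are similar, so the corresponding $y$-values coincide and the inequality holds with equality; this also covers the base case $n=4$ (where $H=C_3$, $G=C_4$, and all the $y$-values are equal), making the argument uniform in $n$. Assembling these pieces excludes $g=4$ and forces $g=3$, that is $\widehat{G}=C_3$. The only genuinely delicate point is the Perron-vector comparison of the previous paragraph; everything else is the forbidden-subgraph bookkeeping of Theorem \ref{thm4} and the subdivision/monotonicity inputs of Lemma \ref{lem1} and Lemma \ref{subg}.
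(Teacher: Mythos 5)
Your argument is correct and follows essentially the same route as the paper: Theorem \ref{thm4} forces the base to be $C_3$ or $C_4$, the forbidden $P_5$ confines pendants to two adjacent cycle vertices, the subdivision comparison reduces $C_4(s,t,0,0)$ to $C_3(s,t,0)$, and adding a pendant with Lemma \ref{subg} gives the contradiction. The only difference is that your ``suppress a degree-2 vertex and compare Perron entries via Lemma \ref{lem1}'' step re-derives Theorem \ref{thm2}, which the paper simply cites at that point.
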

\begin{proof}
By Theorem \ref{thm4}, $\widehat{G}=C_3$ or $C_4$. If $\widehat{G}=C_4$, by Corollary \ref{pend} and $P_5$ is forbidden, $G$ is the graph obtained from $C_4$ by adding $s$ and $t$ pendant edges to two adjacent vertices of $C_4$, respectively, and can be denoted as $C_4(s,t,0,0)$, where $s+t=n-4$. Similarly, $C_3(s,t,r)$ is defined as the graph obtained by adding the $s,t,r$ pendant edges to the three vertices of $C_3$, respectively. By Theorem \ref{thm2} and Lemma \ref{subg}, 
$$\rho_f(C_4(s,t,0,0))\leq\rho_f(C_3(s,t,0))<\rho_f(C_3(s+1,t,0)), $$
while $C_3(s+1,t,0)\in\mathcal{U}_n$, a contradiction. 
\end{proof}

\begin{cor}
Let $G$ be the graph with the largest $\rho_f$ in $\mathcal{B}_n$, then $\widehat{G}=\Theta(1,2,2)$ or $\Theta(2,2,2)$. 
\end{cor}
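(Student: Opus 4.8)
The plan is to exploit Theorem \ref{thm4} together with the observation that $\widehat{G}$, being obtained from $G$ by successively deleting pendant vertices, is an \emph{induced} subgraph of $G$; hence none of $P_5$, $C_5$, $C_3\cdot P_3$, $\infty^*(3,3)$, $\Theta(1,2,3)$ may occur as an induced subgraph of $\widehat{G}$. Since $\widehat{G}$ is bicyclic with no pendant vertices, it is of $\Theta$-, $\infty$-, or $\infty^*$-type, so the whole argument reduces to a finite case check on cycle and path lengths. I would first record the basic length restriction: along any internal path (including a cycle traversed through its unique branch vertex) three consecutive degree-$2$ vertices already force an induced $P_5$, so every internal path has length at most $3$; moreover a cycle of length exactly $5$ is itself an induced $C_5$, and a cycle of length $\geq 6$ contains an induced $P_5$. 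Thus in every type the relevant parameters are confined to $\{1,2,3\}$ for connecting paths and to $\{3,4,5\}$ for cycles, after which only finitely many graphs survive.

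Next I would eliminate the $\infty^*$- and $\infty$-types entirely. In $\infty^*(l_1,l_2)$ the shared vertex $u$ has degree $4$; a cycle of length $5$ gives an induced $C_5$ and length $\geq 6$ an induced $P_5$, so up to symmetry it remains to treat $l_1\leq l_2$ with $l_1,l_2\in\{3,4\}$: the pair $(3,3)$ is forbidden outright, $(3,\ell)$ with $\ell\geq 4$ produces $C_3\cdot P_3$ (the triangle together with a length-$2$ initial segment of the longer cycle), and $(4,\ell)$ with $\ell\geq 4$ yields an induced $P_5$ straddling $u$. For $\infty(l_1,l_2,l_3)$ the key point is that each branch vertex carries a third edge leaving its cycle; hence a triangle at that vertex together with the two steps of that outgoing path is an induced $C_3\cdot P_3$, a $4$-cycle at the vertex together with that path is an induced $P_5$, and longer cycles again give $C_5$ or $P_5$. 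As a cycle must have length $\geq 3$, every choice is excluded, so $\widehat{G}$ must be of $\Theta$-type.

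Finally I would pin down the $\Theta$-type. Writing $\widehat{G}=\Theta(l_1,l_2,l_3)$ with $l_1\leq l_2\leq l_3$, simplicity forces at most one $l_i=1$ and the length bound forces $l_i\leq 3$. Running through the admissible triples, $(1,2,3)$ is forbidden outright; $(1,3,3)$, $(2,3,3)$ and $(3,3,3)$ each contain an induced $P_5$ obtained by splicing the two length-$3$ paths through one branch vertex (the third neighbor of that vertex being excluded from the five chosen vertices); and $(2,2,3)$ contains an induced $C_5$ formed by the length-$3$ path together with one length-$2$ path. The only survivors are $(1,2,2)$ and $(2,2,2)$, i.e.\ $K_4-e$ and $K_{2,3}$; one checks directly that neither contains any forbidden subgraph (the former has only four vertices, the latter is triangle-free on five vertices), which is exactly the assertion $\widehat{G}\in\{\Theta(1,2,2),\Theta(2,2,2)\}$.

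The main obstacle is bookkeeping: each exhibited forbidden subgraph must be verified to be genuinely \emph{induced}, and the presence of a length-$1$ path (which makes the two branch vertices adjacent) can convert an intended $P_5$ into a $C_5$ or introduce an unwanted chord. Care is therefore needed to select the five witnessing vertices so that, apart from the intended path or cycle edges, no two of them are adjacent; tracking which chosen vertices have degree $2$ in $\widehat{G}$ (and hence admit no chords) is precisely what keeps this step routine rather than delicate.
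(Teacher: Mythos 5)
Your proposal is correct and follows essentially the same route as the paper: both arguments invoke Theorem \ref{thm4} to forbid the listed induced subgraphs in $\widehat{G}$ (which is an induced subgraph of $G$), rule out the $\infty$- and $\infty^*$-types, and then finish with a finite case check on $\Theta(l_1,l_2,l_3)$; your version merely spells out the bookkeeping (bounding each path length by $3$ and verifying inducedness) where the paper bounds induced cycle lengths by $4$. No substantive difference or gap.
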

\begin{proof}
By Theorem \ref{thm4}, $\widehat{G}$ must be a $\Theta$-type graph, since $\infty^*(3,3)$ or $C_3\cdot P_3$ or $P_5$ exists in an $\infty^*$-type graph and $C_3\cdot P_3$ or $P_5$ exists in an $\infty$-type graph. Let $\widehat{G}=\Theta(r,s,t)$, $r\leq s\leq t$. Note that any induced cycle of $\widehat{G}$ must have length 3 or 4. If $r\geq2$, then $s+t\leq4$, so $\widehat{G}=\Theta(2,2,2)$. If $r=1$, then $r+t\leq4$, so $\widehat{G}=\Theta(1,2,2)$ or $\Theta(1,2,3)$ or $\Theta(1,3,3)$, but $\Theta(1,2,3)$ is forbidden by Theorem \ref{thm4} and $P_5$ exists in $\Theta(1,3,3)$. 
\end{proof}

\par In addition to property $P^*$, we can define property $P^{**}$ as follows. 
\begin{definition}
If $f(x,y)>0$ is a symmetric real function, increasing and convex in variable $x$ and for any $x_1+y_1=x_2+y_2$ and $|x_1-y_1|>|x_2-y_2|$, $f(x_1,y_1)<f(x_2,y_2)$, then $f$ is called to have property $P^{**}$. 
\end{definition}

Functions of this type contain some common chemical indices such as the second Zagreb index $f=xy$ and Reciprocal Randi\'c index $f=\sqrt{xy}$. These functions show quite different behaviors in contrast to $P^*$-type functions in direct calculations on the $\rho_f$ and we give the following conjecture. 
\begin{con}
Assume that $f(x,y)$ has the property $P^{**}$. For sufficiently large $n$, the extremal graph with the largest $\rho_f$: 
\begin{itemize}
\item[(1)] in $\mathcal{T}_n$ is double star $DS_{\left\lceil\frac n2\right\rceil, \left\lfloor\frac n2\right\rfloor}$; 
\item[(2)] in $\mathcal{U}_n$ is $C_3\left(\left\lceil\frac{n-3}2\right\rceil, \left\lfloor\frac{n-3}2\right\rfloor, 0\right)$; 
\item[(3)] in $\mathcal{B}_n$ is $\Theta(1,2,2)\left(\left\lceil\frac{n-4}2\right\rceil, \left\lfloor\frac{n-4}2\right\rfloor\right)$, which is obtained from $\Theta(1,2,2)$ by adding $\left\lceil\frac{n-4}2\right\rceil, \left\lfloor\frac{n-4}2\right\rfloor$ pendant edges to two vertices of degree 3 of $\Theta(1,2,2)$ respectively. 
\end{itemize}
\end{con}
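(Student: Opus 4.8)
The plan is to combine the Kelmans-based structural results of this section with an asymptotic analysis of a small family of candidate graphs. Since property $P^{**}$ in particular makes $f$ increasing and convex in $x$, the result of Li et al.\ \cite{li2021trees} already forces the extremal tree to be a star $S_n$ or a double star; Corollary \ref{pend} together with the forbidden subgraphs of Theorem \ref{thm4} forces the extremal unicyclic graph to be some $C_3(s,t,r)$ (all non-base vertices pendant and attached to the triangle) and the extremal bicyclic graph to be $\Theta(1,2,2)$ or $\Theta(2,2,2)$ with all non-base vertices pendant. Thus in each of the three cases I reduce to a family depending only on how the $n-O(1)$ pendant vertices are distributed among the $O(1)$ base vertices.

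The guiding principle, and the point where property $P^{**}$ enters, is that $\rho_f$ is maximised by concentrating degree into two \emph{adjacent} hub vertices $u,v$, thereby creating a single heavy edge of weight $f(d_u,d_v)$ with $d_u,d_v\approx n/2$. Because $P^{**}$ is balance-loving ($f(x_1,y_1)<f(x_2,y_2)$ whenever $x_1+y_1=x_2+y_2$ and $|x_1-y_1|>|x_2-y_2|$), this weight is the largest edge weight attainable under the degree budget, and it dominates $\rho_f$ for large $n$: by subgraph monotonicity (Lemma \ref{subg}) $\rho_f(G)\ge f(d_u,d_v)$, and each candidate admits an equitable partition (the two hubs, the classes of their pendant leaves, and the few remaining base vertices) whose quotient matrix has bounded size. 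I would write down the characteristic equation of this quotient matrix explicitly in each case and show that its largest root equals $f(d_u,d_v)\bigl(1+o(1)\bigr)$ as $n\to\infty$.

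With the quotient equations in hand, the determination of each extremal graph splits into two kinds of comparison. \emph{Balancing}: among configurations supported on a fixed pair of adjacent hubs, I show the balanced one wins by a pendant-transfer argument — moving one pendant from the larger hub to the smaller changes the heavy-edge weight $f(d_u,d_v)$ upward (a $P^{**}$ effect of order $n^2$ in $\rho_f^2$) while changing the pendant contributions $\sum(d-1)f(d,1)^2$ only by an amount of order $n$, so $\rho_f$ strictly increases whenever $|d_u-d_v|\ge 2$; this pins down the $\lceil\cdot\rceil,\lfloor\cdot\rfloor$ splits. \emph{Support selection}: I compare the balanced two-hub graph against every competitor allowed by the structural reduction — the star (a single hub, hence no heavy edge), any placement that puts pendants on a degree-$2$ base vertex (the third triangle vertex in the unicyclic case, or the degree-$2$ vertices of $\Theta(1,2,2)$), and the base $\Theta(2,2,2)$, whose two degree-$3$ vertices are \emph{non-adjacent} so that the heaviest available connection is only $f(\cdot,2)$. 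Each competitor either lacks an edge of weight $f(\approx n/2,\approx n/2)$ or realizes it with strictly smaller hub degrees (note $\Theta(1,2,2)$ retains one more pendant than $\Theta(2,2,2)$), so its quotient root is asymptotically smaller; this yields the claimed winners $DS_{\lceil n/2\rceil,\lfloor n/2\rfloor}$, $C_3(\lceil\frac{n-3}2\rceil,\lfloor\frac{n-3}2\rfloor,0)$ and $\Theta(1,2,2)(\lceil\frac{n-4}2\rceil,\lfloor\frac{n-4}2\rfloor)$.

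The hard part will be making the two competing tendencies quantitative and uniform over all admissible $f$. The heavy-edge term $f(d_u,d_v)^2$ rewards balance with a gain of order $n^2$, whereas the aggregated pendant term $\sum_{\text{hub }w}(d_w-1)f(d_w,1)^2$ rewards imbalance with a gain of order $n$; the conclusion that balance and two-hub concentration win therefore rests on controlling the \emph{relative growth} of $f(x,x)$ against $f(x,1)$, and it is precisely this that restricts the statement to $n$ sufficiently large and may demand a mild growth hypothesis beyond $P^{**}$ (for $f=xy$ one checks directly that the order-$n^2$ term dominates once $n>6$). A secondary difficulty is that the boundary comparisons — star versus double star, and $\Theta(1,2,2)$ versus $\Theta(2,2,2)$ — are not settled by the leading term alone and require explicit estimates of the quotient-matrix roots for each base, carried out uniformly in $f$.
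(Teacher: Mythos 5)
First, a point of order: the statement you were asked to prove is posed in the paper as a \emph{conjecture}. The paper contains no proof of it --- only the heuristic remark, immediately after, that the conjectured extremal graphs share one heavy edge of weight roughly $f(n/2,n/2)$ --- so there is no paper argument to compare yours against; you are attempting an open problem. Your structural reduction is sound and is exactly what the paper's Section 6 already supplies: property $P^{**}$ implies the hypotheses of Li et al., so the extremal tree is a star or a double star, and Theorem \ref{thm4} together with Corollary \ref{pend} and its two companion corollaries confines the unicyclic and bicyclic candidates to $C_3(s,t,r)$ and to $\Theta(1,2,2)$ or $\Theta(2,2,2)$ with all pendant vertices attached to base vertices.

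The fatal problem is the analytic core of your plan. The claim that the heavy-edge weight $f(d_u,d_v)$ ``dominates $\rho_f$ for large $n$'' is not a consequence of $P^{**}$, and it is false for explicit $P^{**}$ functions; consequently the conjecture itself fails without an extra growth hypothesis. Take $f(x,y)=xy+\epsilon(x^2+y^2)$ with $0<\epsilon<\tfrac12$. Writing $s=x+y$, $d=x-y$ gives $f=\tfrac14(s^2-d^2)+\tfrac{\epsilon}{2}(s^2+d^2)$, so $f$ is symmetric, positive, increasing and convex in $x$ (its second derivative in $x$ is $2\epsilon>0$), and strictly decreasing in $|d|$ for fixed $s$; hence $f$ has property $P^{**}$. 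But for this $f$ one has $f(x,1)\sim\epsilon x^2$, the same order as $f(x,x)$, so the pendant contributions, not the heavy edge, dominate. Indeed $\rho_f(S_n)=\sqrt{n-1}\,f(n-1,1)\sim\epsilon n^{5/2}$, while for the balanced double star with hub degrees $a=n/2$ the equitable-partition equation
$$\rho^2=f(a,a)\rho+(a-1)f(a,1)^2$$
has $f(a,a)=O(n^2)$ and $(a-1)f(a,1)^2\asymp n^5$, whence $\rho_f\bigl(DS_{a,a}\bigr)\sim\sqrt{a}\,f(a,1)\sim\epsilon n^{5/2}/(4\sqrt2)$. The star beats the conjectured extremal double star by a factor tending to $4\sqrt2$, so part (1) of the conjecture is false for this $f$, and the same imbalance-favoring phenomenon kills the unicyclic and bicyclic comparisons (e.g.\ $C_3(n-3,0,0)$ versus $C_3(\lceil\frac{n-3}2\rceil,\lfloor\frac{n-3}2\rfloor,0)$). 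The caveat you flag yourself --- that the argument ``may demand a mild growth hypothesis beyond $P^{**}$'' --- is therefore not a refinement to be supplied later: $P^{**}$ never controls the ratio $f(x,x)/f(x,1)$, which must exceed roughly $\sqrt{n}$ for your two-hub concentration argument to work. Without such a hypothesis the statement is false; with it, you would be proving a different (corrected) statement, which would in fact be a worthwhile way to salvage both your argument and the conjecture.
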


The conjectured extremal graphs all have an edge with a large edge weight in common: 
$$w_{max}=\begin{cases}
f\left(\left\lceil\frac n2\right\rceil, \left\lfloor\frac n2\right\rfloor\right) & \text{for } DS_{\left\lceil\frac n2\right\rceil, \left\lfloor\frac n2\right\rfloor}; \\ 
f\left(\left\lceil\frac{n+1}2\right\rceil, \left\lfloor\frac{n+1}2\right\rfloor\right) & \text{for } C_3\left(\left\lceil\frac{n-3}2\right\rceil, \left\lfloor\frac{n-3}2\right\rfloor, 0\right); \\
f\left(\left\lceil\frac{n+2}2\right\rceil, \left\lfloor\frac{n+2}2\right\rfloor\right) & \text{for } \Theta(1,2,2)\left(\left\lceil\frac{n-4}2\right\rceil, \left\lfloor\frac{n-4}2\right\rfloor\right). 
\end{cases}$$

\bibliographystyle{elsarticle-num}
\bibliography{Shen2023}

\end{document}